%



\documentclass{amsart}

\usepackage[all]{xy}
\usepackage[mathscr]{eucal}
\usepackage{amssymb}
\usepackage{amsthm}
\usepackage{bbold}
\usepackage{enumerate}
\usepackage{array}
\usepackage{amsmath}
\usepackage{hyperref}%


\numberwithin{equation}{section}
\setcounter{tocdepth}{1}



\swapnumbers

\newtheorem{Thm}[equation]{Theorem}
\newtheorem*{Thm*}{Theorem}
\newtheorem{Prop}[equation]{Proposition}
\newtheorem{Lem}[equation]{Lemma}
\newtheorem{Cor}[equation]{Corollary}

\theoremstyle{remark}
\newtheorem{Def}[equation]{Definition}
\newtheorem{Not}[equation]{Notation}
\newtheorem{Exa}[equation]{Example}
\newtheorem{Cons}[equation]{Construction}
\newtheorem{Conv}[equation]{Convention}
\newtheorem{Rem}[equation]{Remark}


\newcommand{\nc}{\newcommand}
\nc{\dmo}{\DeclareMathOperator}

\dmo{\Abelem}{Abelem}
\dmo{\Id}{Id}
\dmo{\Spc}{Spc}
\dmo{\chara}{char}%
\dmo{\colim}{colim}
\dmo{\cone}{cone}
\dmo{\Der}{D}
\dmo{\Ext}{Ext}
\dmo{\rmH}{H}
\dmo{\Hom}{Hom}
\dmo{\id}{id}
\dmo{\Img}{Im}
\dmo{\Mod}{Mod}
\dmo{\modname}{mod}%
\dmo{\Mor}{Mor}%
\dmo{\Or}{Or}
\dmo{\pr}{pr}
\dmo{\Proj}{Proj}
\dmo{\Res}{Res}
\dmo{\smallb}{b}
\dmo{\smallperf}{perf}
\dmo{\Spec}{Spec}
\dmo{\stab}{stab}
\dmo{\supp}{supp}
\dmo{\VB}{VB}

\nc{\AK}{A\MModcat{K}}
\nc{\DbG}{\Db(\kk G)}
\nc{\Db}{\Der^{\smallb}}
\nc{\Displ}{\displaystyle}
\nc{\Dperf}{\Der^{\smallperf}}
\nc{\Gsets}{G\mathsf{-sets}}
\nc{\HGK}{\doublequot HGK}
\nc{\Homcat}[1]{\Hom_{\cat #1}}
\nc{\MModcat}[1]{\MMod_{\cat #1}}%
\nc{\MMod}{\,\text{-}\Mod}%
\nc{\Mid}{\,\big|\,}
\nc{\PZG}{\cat C_{\bbZ}(\bbZ G)}
\nc{\SET}[2]{\big\{\,#1\Mid#2\,\big\}}
\nc{\SpcAK}{\Spc(A\MModcat{K})}
\nc{\SpcK}{\Spc(\cat K)}
\nc{\adhpt}[1]{\adh{\{#1\}}}
\nc{\adh}[1]{\overline{#1}}
\nc{\adjto}{\rightleftarrows}
\nc{\bbC}{\mathbb{C}}
\nc{\bbF}{\mathbb{F}}
\nc{\bbR}{\mathbb{R}}
\nc{\bbZ}{\mathbb{Z}}
\nc{\cV}{\mathcal{V}}
\nc{\cat}[1]{\mathscr{#1}}
\nc{\doublequot}[3]{#1\backslash #2/#3}
\nc{\eg}{{e.\,g.}}
\nc{\gp}{\mathfrak{p}}
\nc{\gq}{\mathfrak{q}}
\nc{\hook}{\hookrightarrow}
\nc{\ideal}[1]{\langle #1\rangle}
\nc{\ie}{{i.e.}\ }
\nc{\inv}{^{-1}}
\nc{\isotoo}{\mathop{\buildrel \sim\over\too}}
\nc{\isoto}{\buildrel \sim\over\to}
\nc{\kk}{\Bbbk}
\nc{\mmod}{\,\text{--}\modname}%
\nc{\onto}{\mathop{\twoheadrightarrow}}
\nc{\op}{{^{\operatorname{op}}}}
\nc{\otoo}[1]{\overset{#1}{\,\too\,}}
\nc{\oto}[1]{\overset{#1}\to}
\nc{\ourfrac}[2]{\genfrac{}{}{0pt}{}{\scriptstyle #1}{\scriptstyle #2}}
\nc{\oursetminus}{\!\smallsetminus\!}
\nc{\potimes}[1]{^{\otimes #1}}
\nc{\sbull}{{\scriptscriptstyle\bullet}}
\nc{\smallmatrice}[1]{\left(\begin{smallmatrix} #1 \end{smallmatrix}\right)}
\nc{\then}{\Rightarrow}
\nc{\too}{\mathop{\longrightarrow}\limits}
\nc{\uA}{\underline{A}}
\nc{\unit}{\mathbb{1}}


\begin{document}


\title[Separable extensions in tt-geometry]{Separable extensions in tensor-triangular geometry and generalized Quillen stratification}%
\author{Paul Balmer}
\date{2016 February 13}

\address{Paul Balmer, Mathematics Department, UCLA, Los Angeles, CA 90095-1555, USA}
\email{balmer@math.ucla.edu}
\urladdr{http://www.math.ucla.edu/$\sim$balmer}

\begin{abstract}
We exhibit a link between the Going-Up Theorem in commutative algebra and Quillen Stratification in modular representation theory. To this effect, we study the continuous map induced on spectra by a separable extension of tensor-triangulated categories. We determine the image of this map and bound the cardinality of its fibers by the degree of the extension. We then prove a weak form of descent, ``up-to-nilpotence", which allows us to generalize Quillen Stratification to equivariant derived categories.

\bigbreak

Nous exhibons un lien entre le th\'eor\`eme du ``going-up" en alg\`ebre commutative et le th\'eor\`eme de stratification de Quillen en th\'eorie des repr\'esentations modulaires. Dans ce but, nous \'etudions l'application continue induite sur les spectres par une extension s\'eparable de cat\'egories triangul\'ees tensorielles. Nous en d\'eterminons l'image et bornons le cardinal de ses fibres par le degr\'e de l'extension. Nous prouvons alors une forme faible de descente, ``\`a nilpotence pr\`es", qui nous permet de g\'en\'eraliser la stratification de Quillen \`a d'autres cat\'egories d\'eriv\'ees \'equivariantes.
\end{abstract}

\subjclass[2010]{18E30, 20J05, 13B24, 55U35}
\keywords{Separable, tt-category, descent, nilpotence, Quillen stratification; S\'eparable, tt-cat\'egorie, descente, nilpotence, stratification de Quillen}

\thanks{Research supported by NSF grant~DMS-1303073.}

\maketitle

\vskip-\baselineskip\vskip-\baselineskip
\tableofcontents
\vskip-\baselineskip\vskip-\baselineskip\vskip-\baselineskip


\section*{Introduction}
\medskip

There are rich and growing connections between commutative algebra and modular representation theory, notably via homological methods. Some of these connections can be built using tensor-triangulated categories. Recall that tensor-triangulated categories also appear in several other settings, like motivic theory, equivariant stable homotopy theory, or Kasparov's KK-theory of C*-algebras, for instance. This framework is the backdrop of \emph{tensor-triangular geometry}, or \emph{tt-geometry} for short, see~\cite{BalmerICM}.

In the present work, we use tt-geometry to connect two classical and well-known results, namely the Going-Up Theorem in commutative algebra and Quillen's Stratification Theorem in modular representation theory. Let us first remind the reader\,:

\begin{Thm*}[Going-Up]
Let $R\subset A$ be an integral extension of commutative rings, let $\gq$ be a prime in~$A$ and $\gp'$ a prime in~$R$ containing~$\gq\cap R$; then there exists a prime $\gq'$ in~$A$ containing $\gq$ such that $\gq'\cap R=\gp'$. Further, $\Spec(A)\to \Spec(R)$ is surjective and a weak form of injectivity holds, known as ``Incomparability"\,: if $\gq\subseteq \gq'$ are two primes in~$A$ such that $\gq\cap R=\gq'\cap R$ then $\gq=\gq'$.
\end{Thm*}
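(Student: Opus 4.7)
The plan is to deduce all three assertions from the single statement known as Lying-Over, and then to establish Lying-Over by localization plus the key algebraic lemma that contractions of maximal ideals along integral extensions are maximal (equivalently, an integral extension of a field is a field). Integrality is preserved under both localization at a multiplicative set and quotient by an ideal, so these reductions stay within the theorem's hypotheses.

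First I would prove surjectivity of $\Spec(A)\to\Spec(R)$. Given $\gp\in\Spec(R)$, localize at $\gp$: the map $R_\gp\hookrightarrow A_\gp$ remains an integral injection, so $A_\gp\neq 0$ and admits a maximal ideal $\mathfrak{n}$. By the key lemma, its contraction $\mathfrak{n}\cap R_\gp$ is maximal in the local ring $R_\gp$, hence equal to $\gp R_\gp$. The corresponding prime of $A$ then contracts to $\gp$.

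Next, for the Going-Up statement itself, take $\gq\in\Spec(A)$ and $\gp'\in\Spec(R)$ with $\gq\cap R\subseteq\gp'$, and pass to the integral quotient extension $R/(\gq\cap R)\hookrightarrow A/\gq$. Applying surjectivity to the prime $\gp'/(\gq\cap R)$ of the smaller ring yields a prime of $A/\gq$ contracting to it, which pulls back to the desired $\gq'\supseteq\gq$ in $A$ with $\gq'\cap R=\gp'$.

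For Incomparability, suppose $\gq\subseteq\gq'$ in $A$ both contract to $\gp$ in $R$. Localize at $\gp$ and quotient by $\gq A_\gp$ to obtain the integral extension $R_\gp/\gp R_\gp\hookrightarrow A_\gp/\gq A_\gp$, whose source is a field and whose target is a nonzero domain (since $\gq A_\gp$ is prime). The key lemma now forces the target to be a field, so $\gq A_\gp$ is maximal, and the intermediate prime $\gq' A_\gp$ must coincide with it, giving $\gq=\gq'$ after contracting back to $A$. The only genuine obstacle is the key lemma itself, which is proved by the classical trick of manipulating a minimal-degree integral equation for a nonzero element to exhibit its inverse as a polynomial expression in it.
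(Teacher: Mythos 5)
Your proof is correct and is the standard textbook argument: Lying-Over via localization at $\gp$ together with the lemma that an integral domain extension of a field is a field, Going-Up by passing to the integral quotient extension $R/(\gq\cap R)\hookrightarrow A/\gq$, and Incomparability by localizing at $\gp$ and quotienting by $\gq A_\gp$. Be aware, though, that the paper does not supply a proof of this statement at all: it appears in an unnumbered reminder (``Let us first remind the reader'') solely to motivate the tensor-triangular analogues, which are the paper's actual content. Those analogues (Theorem~\ref{thm:varphi}, Theorem~\ref{thm:going-up}, Theorem~\ref{thm:main}) cannot be proved by the route you take, because, as the paper emphasizes, there is no tt-analogue of the quotient ring $R/I$ and no residue fields; instead the tt-proofs rest on the Existence Lemma~\ref{lem:exist} for primes avoiding a $\otimes$-multiplicative family, on the separability-driven inequality $z\leq F_AU_A(z)$, and on induction on the degree of the tt-ring via the splitting $A\otimes A\cong A\times C$ of Theorem~\ref{thm:split}. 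So your argument is a fine proof of the quoted classical theorem, but it is methodologically orthogonal to the argument the paper develops for its tt-generalization.
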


\begin{Thm*}[Quillen Stratification~\cite{Quillen71}]
Let $G$ be a finite group and $\kk$ be a field of positive characteristic~$p$ dividing the order of~$G$. Let $\cV_G:=\Proj(\rmH^\sbull(G,\kk))$ be the \emph{projective support variety} of~$G$. Then there is a canonical homeomorphism
\begin{equation}
\label{eq:Quillen}%
\mathop{\colim}_{H\in \Or(G,\,\Abelem)}\!\!\cV_H\ \ \isotoo \ \cV_G
\end{equation}
where $\Or(G,\Abelem)$ is the full subcategory of the orbit category of~$G$ on the elementary abelian $p$-subgroups of~$G$ (see Definition~\ref{def:Or(G)} if necessary).
\end{Thm*}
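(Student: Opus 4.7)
The plan is to reinterpret the projective support variety $\cV_G$ tt-geometrically as the spectrum $\Spc(\stab(\kk G))$ of the stable module category, via the classical identification with $\Proj(\rmH^\sbull(G,\kk))$, and similarly for every subgroup~$H$. Under this translation, each restriction functor $\Res^G_H\colon\stab(\kk G)\to\stab(\kk H)$ is extension-of-scalars along the commutative separable algebra $A_H=\kk(G/H)$ in~$\stab(\kk G)$, and so fits the framework of separable tt-extensions treated in the paper. Packaging all elementary abelian $p$-subgroups at once, I would work with the single separable ring object $A=\prod_{H}A_H$, indexed by a set of representatives of conjugacy classes of such~$H$, and the induced extension $\stab(\kk G)\to A\MModcat{\stab(\kk G)}$, whose spectrum is the disjoint union $\coprod_H\cV_H$.

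Surjectivity of the induced map $\coprod_H\cV_H\too\cV_G$ is the first goal. By the classical detection theorem of Quillen--Venkov and Chouinard, a morphism in $\stab(\kk G)$ is $\otimes$-nilpotent whenever it restricts to zero on every elementary abelian $p$-subgroup. In the terminology announced in the abstract, this is precisely the statement that the separable extension above is faithful up to nilpotence, and I would invoke the paper's weak descent result to conclude that $\Spc$ of such an extension is surjective.

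The core of the argument is then the fiber analysis. The paper's upper bound on the cardinality of the fibers of $\Spc(\Res^G_H)$ by the degree $[G:H]$ reduces matters to matching, for each $\gp\in\cV_G$, its preimage in $\coprod_H\cV_H$ with the appropriate morphisms in the orbit category $\Or(G,\Abelem)$. Concretely, given $\gp_H\in\cV_H$ and $\gp_K\in\cV_K$ lying over the same~$\gp$, one must produce a zig-zag of conjugation and inclusion morphisms linking them; such a zig-zag is precisely the data encoded in $\Hom_{\Or(G,\Abelem)}(G/H,G/K)$, and it would be extracted by decomposing the product $A_H\otimes A_K$, up to nilpotence, into local summands indexed by double cosets $H\backslash G/K$ via the Mackey formula.

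Finally, to promote the bijection $\colim_{H\in\Or(G,\Abelem)}\cV_H\to\cV_G$ to a homeomorphism, observe that restriction makes $\rmH^\sbull(H,\kk)$ a finite module over $\rmH^\sbull(G,\kk)$ by Evens--Venkov, so each $\cV_H\to\cV_G$ is closed and the comparison map is a continuous closed bijection. I expect the main obstacle to be the fiber analysis above: the paper's degree bound only yields a numerical estimate on $|\Spc(\Res^G_H)\inv(\gp)|$, and promoting it to the exact orbit-category equivalence relation requires analyzing $\Spc$ of the full product extension $A=\prod_H A_H$, controlling how nilpotence interacts with the Mackey decomposition rather than with a single $\Res^G_H$.
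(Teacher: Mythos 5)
Your proposal is on the right track and follows essentially the same route as the paper: translate $\cV_G$ to $\Spc(\stab(\kk G))$, form the separable ring $A=\prod_H A_H$ over the elementary abelian $p$-subgroups, invoke nilpotence detection to get nil-faithfulness, and use the Mackey decomposition of $A\otimes A$ to compare the coequalizer with the orbit-category colimit. Two points deserve comment.

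First, you input nil-faithfulness via Quillen--Venkov and Chouinard, whereas the paper derives it from Serre's 1965 vanishing-of-Bocksteins theorem (Thm.\,\ref{thm:Serre}): the vanishing product $\beta(z_1)\cdots\beta(z_m)=0$ in $\rmH^{2m}(G,\kk)$ directly shows $\unit\in\ideal{A}$, i.e.\ $\supp(A)=\SpcK$, which is equivalent to nil-faithfulness by Prop.\,\ref{prop:solid}. Both inputs are legitimate; the paper's choice is notable because Serre's theorem \emph{predates} and is independent of Quillen's stratification and of Quillen--Venkov, so the argument avoids anything downstream of the result it is recovering.

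Second — and this is the real gap in your plan — you underestimate what the descent-up-to-nilpotence coequalizer (Thm.\,\ref{thm:nil-desc}) delivers. You use it only for surjectivity, then propose to recover the orbit-category equivalence relation by hand from the degree bound on fibers, which you rightly worry ``only yields a numerical estimate.'' But the coequalizer already identifies the entire equivalence relation on $\Spc(\AK)$ as the one generated by $\varphi_1,\varphi_2$ coming from $A^{\otimes 2}$. Once you apply the Mackey isomorphism $A(G/H)\otimes A(G/K)\cong\prod_{[g]\in\HGK}A(G/(H^g\cap K))$ — which is a genuine ring isomorphism in $\PZG$, not just ``up to nilpotence'' — the two maps $\varphi_1,\varphi_2$ become exactly the images under $\Spc\circ A(-)$ of the structure maps $\alpha_g,\alpha_1$ of $\Or(G,\Abelem)$, and the coequalizer is tautologically the orbit-category colimit. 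Your perceived obstacle dissolves; no extra fiber analysis or Evens--Venkov closedness is needed, since $\varphi_A$ is closed already by Thm.\,\ref{thm:varphi}\,\eqref{it:S}. The degree bound from Thm.\,\ref{thm:main} is an internal ingredient in \emph{proving} the coequalizer theorem, not something one applies directly to the fibers here.
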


It is not obvious to the naked eye why the above two results should be related. Let us observe the tip of the iceberg\,: The Going-Up Theorem forces the rings $A$ and~$R$ to have the same Krull dimension; similarly Quillen Stratification forces the Krull dimension of~$\cV_G$ to be the maximum of the dimensions of the~$\cV_H$, \ie the $p$-rank of~$G$ minus one -- an important application of~\cite{Quillen71}.

Here, we prove an analogue of Going-Up in tt-geometry and show that it specializes to Quillen Stratification when applied to modular representation theory. This result illustrates the connections between the two subjects and the depth of tt-geometry. To go beyond unification and connection, we also prove some new results, namely we extend Quillen Stratification to any tensor-triangulated category receiving the derived category of~$G$.


\section{Statement of results}
\medskip

To do tt-geometry, one needs a tt-category~$\cat K$. (Here, ``tt'' is short for ``tensor-triangular" or ``tensor-triangulated", as appropriate.) In commutative algebra, we use $\cat K=\Dperf(R)$, the homotopy category of bounded complexes of finitely generated projective $R$-modules. In modular representation theory, we use $\cat K=\stab(\kk G)$, the stable category of finitely generated $\kk G$-modules modulo the projective ones.

To recover spaces like the affine scheme $\Spec(R)$ in the first example and the support variety $\cV_G$ in the second, we use the \emph{spectrum} $\SpcK$ of a tt-category~$\cat K$. This fundamental tool of tt-geometry was introduced in~\cite{Balmer05a} as the universal topological space in which objects $x$ of~$\cat K$ admit reasonable supports  $\supp(x)\subseteq \SpcK$; see Rem.\,\ref{rem:tt}. The spectrum can also be constructed by means of \emph{prime ideals} $\cat P$ in~$\cat K$. By~\cite{Balmer05a}, it recovers the Zariski spectrum $\Spc(\Dperf(R))\cong \Spec(R)$ and the support variety $\Spc(\stab(\kk G))\cong \cV_G$. This unification is the key to relate affine schemes and support varieties via tt-geometry. See more in Remark~\ref{rem:context}.

To exhibit a first analogy between Going-Up and Quillen Stratification, let us observe that both situations involve not only \emph{one} tt-category but actually \emph{two} (or more). In commutative algebra, it is rather obvious: We have the ring homomorphism $R\to A$, hence an extension-of-scalars $\Dperf(R)\to \Dperf(A)$ which is a tt-functor. In modular representation theory, we also have tt-functors $\Res^G_H:\stab(\kk G)\to \stab(\kk H)$ given by restriction from the group~$G$ to its subgroups~$H\leq G$, for instance the elementary abelian ones. Another key step in our discussion is to understand those restriction functors $\stab(\kk G)\to \stab(\kk H)$ as a form of ``extension-of-scalars", similar to the extension-of-scalars of commutative algebra.

This is made possible thanks to a good notion of ``ring" in a general tt-category~$\cat K$, inspired by commutative algebra but flexible enough to be useful in representation theory as well. These good rings are the ``tt-rings" of~\cite{Balmer14}\,: An associative and unital ring object $A\otimes A\otoo{\mu}A$ in~$\cat K$ is called a \emph{tt-ring} if it is commutative and \emph{separable}, in the classical sense (\ie the multiplication $\mu$ has a section $\sigma:A\to A\otimes A$ which is $A$-linear on both sides). Separability of~$A$ guarantees that the category $\AK$ of good-old $A$-modules in~$\cat K$ remains triangulated, by~\cite{Balmer11} (see Remark~\ref{rem:sep-tri}). Commutativity of~$A$ allows us to equip $\AK$ with a tensor $\otimes_A$. In short, if $A$ is a tt-ring in a tt-category~$\cat K$, then $\AK$ is again a tt-category and extension-of-scalars $F_A:\cat K\too \AK$ is a tt-functor.

By~\cite{Balmer15} the restriction functor $\Res^G_H:\stab(\kk G)\to \stab(\kk H)$ is actually such an extension-of-scalars with respect to the particular tt-ring~$A^G_H:=\kk(G/H)$, with multiplication extending $\kk$-linearly the rule~$\gamma\cdot\gamma=\gamma$ and $\gamma\cdot\gamma'=0$ for all $\gamma\neq\gamma'$ in~$G/H$ (see Constr.\,\ref{cons:AX}). More precisely, there is an equivalence
\begin{equation}
\label{eq:AGH}%
A^G_H\MModcat{{}\cat K(G)}\cong \cat K(H)
\end{equation}
for $\cat K(G)=\stab(\kk G)$ and of course $\cat K(H)=\stab(\kk H)$; under this equivalence, extension-of-scalars $F_{A^G_H}$ becomes isomorphic to restriction~$\Res^G_H$. In particular, the induced map on spectra recovers the usual map on support varieties $\cV_H\to \cV_G$. This recasting of restriction as an extension-of-scalars is already true for derived categories, \ie the above~\eqref{eq:AGH} holds if $\cat K(G)$ means $\Db(\kk G)$, etc. See details in~\cite[Part~I]{Balmer15}. Furthermore we show in~\cite{BalmerDellAmbrogioSanders14pp} that such results hold quite generally for equivariant tt-categories, way beyond representation theory.

In general, for any tt-category~$\cat K$ and any tt-ring~$A$ in~$\cat K$, the tt-functor $F_A:\cat K\to A\MModcat{K}$ induces a continuous map on spectra
\begin{equation}
\label{eq:varphiA}%
\varphi_A:=\Spc(F_A)\ :\quad \Spc(A\MModcat{K})\too \Spc(\cat K)\,.
\end{equation}
It is clearly important to study this map~$\varphi_A$ in general, independently of Going-Up or Quillen Stratification. Here is our main tt-geometric result (see details below):

\begin{Thm}[Descent-up-to-nilpotence, see Thm.\,\ref{thm:nil-desc}]
\label{thm:main-intro}%
Let $\cat K$ be a tt-category and let $A$ be a tt-ring in~$\cat K$. Suppose that $A$ has finite degree and that $A$ detects $\otimes$-nilpotence of morphisms. Then we have a coequalizer of topological spaces
\begin{equation}
\label{eq:Coeq}%
\vcenter{\xymatrix{\Spc(A\potimes{2}\MModcat{K}) \ \ar@<.2em>[r]^-{\varphi_1} \ar@<-.2em>[r]_-{\varphi_2}
& \ \Spc(A\MModcat{K}) \ar[r]^-{\varphi_A}
& \SpcK
}}
\end{equation}
where $\varphi_1$ and $\varphi_2$ are induced by the two obvious homomorphisms $A\xymatrix@C=1em{\ar@<-.2em>[r]\ar@<.2em>[r]&}A\otimes A$.
\end{Thm}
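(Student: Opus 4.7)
The plan is to verify the three conditions characterizing a topological coequalizer: (i) $\varphi_A$ equalizes the pair $(\varphi_1,\varphi_2)$; (ii) $\varphi_A$ is surjective and its fibers coincide with the orbits of the equivalence relation generated by $\varphi_1$ and $\varphi_2$; (iii) $\SpcK$ carries the quotient topology induced by $\varphi_A$. Condition (i) is formal: either of the two composites of extension-of-scalars $\cat K\to\AK\to (A\otimes A)\MModcat{K}$ built from the ring homomorphisms $\eta_i:A\to A\otimes A$ (for $i=1,2$) agrees with $F_{A\otimes A}$, so $\varphi_A\circ\varphi_i = \Spc(F_{A\otimes A})$ for both~$i$.

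For the surjectivity part of (ii), I would invoke the image description expected from the Going-Up portion of the paper, namely that the image of $\varphi_A$ is $\supp(A)\subseteq\SpcK$. The hypothesis that $A$ detects $\otimes$-nilpotence then forces $\supp(A)=\SpcK$: otherwise $A$ would vanish in some local tt-category $\cat K/\gp$, which combined with nilpotence detection contradicts the non-triviality of $\cat K/\gp$. For condition (iii), the finite-degree hypothesis is used to show that $\varphi_A$ is a closed spectral map with finite fibers; a surjective, closed, continuous map is automatically a topological quotient.

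The heart of the argument is the fiber identification in (ii). Given two primes $\gq,\gq'\in\Spc(\AK)$ lying above the same $\gp\in\SpcK$, I want to produce $\mathfrak{r}\in\Spc((A\otimes A)\MModcat{K})$ with $\varphi_1(\mathfrak{r})=\gq$ and $\varphi_2(\mathfrak{r})=\gq'$; equivalently, the comparison map
\[
\Spc\bigl((A\otimes A)\MModcat{K}\bigr) \too \Spc(\AK) \times_{\SpcK} \Spc(\AK)
\]
should be surjective. I would prove this by localizing to the tt-residue field at~$\gp$, where $\Spc(\AK)$ becomes a finite set of cardinality at most the degree of~$A$, and then applying the surjectivity already established in step (ii) to $A\otimes A$ viewed as a tt-ring over~$A$ inside $\AK$. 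The main obstacle is verifying that the base-changed extension still detects $\otimes$-nilpotence and has finite degree; that is, that the two standing hypotheses are preserved under $\otimes$-squaring and under extension of scalars along $F_A$. Once this inheritance is secured, surjectivity onto the fiber product together with (i) and (iii) delivers the coequalizer.
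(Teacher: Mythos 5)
Your outline correctly identifies the structure of what must be proved: (i) that $\varphi_A$ coequalizes, (ii) that $\varphi_A$ is onto $\SpcK$ under the nil-faithfulness hypothesis (which goes through $\Img(\varphi_A)=\supp(A)$ and the equivalence $\supp(A)=\SpcK \iff A$ nil-faithful), and (iii) that $\varphi_A$ is a closed map, hence a quotient map, so it suffices to show that two primes with the same image are identified by the equivalence relation generated by $\varphi_1,\varphi_2$. Reducing the last point to surjectivity of $\Spc((A\otimes A)\MModcat{K})\to\Spc(\AK)\times_{\SpcK}\Spc(\AK)$ is also the right move. Up to here you have reconstructed the skeleton of Theorem~\ref{thm:coeq} and Proposition~\ref{prop:solid}.

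The gap is in the mechanism you propose for that fiber-product surjectivity, which is the actual technical heart. You say you would ``localize to the tt-residue field at~$\gp$, where $\Spc(\AK)$ becomes a finite set of cardinality at most the degree.'' But there is no tt-residue field at a prime in general tt-geometry --- the paper explicitly flags this absence in the introduction as the main obstacle, and the statement that the fiber over~$\gp$ is finite and discrete with at most~$d$ points is part (\ref{it:finite}) of Theorem~\ref{thm:main}, i.e.\ a nontrivial output of the argument rather than an available input. Moreover, even granting finiteness, the surjectivity you want is not a consequence of ``surjectivity of $\varphi$ applied to $A\otimes A$ over~$A$'': that alone produces some $\mathfrak{r}$ with $\varphi_2(\mathfrak{r})=\gq'$, but gives no control on $\varphi_1(\mathfrak{r})$. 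What is actually needed is the weakly cartesian property of Corollary~\ref{cor:w-pb}, and that rests on two nontrivial ingredients you have not supplied: Lemma~\ref{lem:pb}, an Existence-Lemma argument with a carefully chosen $\otimes$-multiplicative set, which produces $\mathfrak{r}$ with $\varphi_2(\mathfrak{r})=\gq'$ and only a containment $\varphi_1(\mathfrak{r})\subseteq\gq$; and the Incomparability Theorem~\ref{thm:main}\,(\ref{it:incomp}), which upgrades that containment to equality. Incomparability in turn is proved by induction on the degree via the splitting $A\otimes A\cong A\times C$ with $\deg(C)=d-1$ of Theorem~\ref{thm:split} --- this is the real place where finite degree bites, not merely in preserving standing hypotheses. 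Your proposal as written has no substitute for this induction, and the residue-field shortcut it leans on is unavailable.
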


The \emph{degree} of a tt-ring $A$ has also been introduced in~\cite{Balmer14} but can easily be considered as a black-box here. It is the natural measure of the ``size" of the tt-ring~$A$. It will allow us to prove some results by induction on the degree, via Theorem~\ref{thm:split}. Having finite degree is a mild hypothesis which holds for all tt-rings in all standard tt-categories of compact objects in algebraic geometry, homotopy theory or modular representation theory, by~\cite[\S\,4]{Balmer14}. See more in Section~\ref{se:hyp}.

The assumption that $A$ \emph{detects $\otimes$-nilpotence} is the important condition of the theorem. It means that if a morphism $f$ in~$\cat K$ satisfies $A\otimes f\!=\!0$ then $f\potimes{n}\!=\!0$ for some $n\geq1$. This property of the tt-ring~$A$ will be called \emph{nil-faithfulness}. Faithfulness ($A\otimes f=0\,\then\,f=0$) is a special case of interest, as we shall discuss below. Proposition~\ref{prop:solid} provides further equivalent characterizations of nil-faithfulness, like for instance that $A$ generates~$\cat K$ as a thick $\otimes$-ideal, \ie $\supp(A)=\SpcK$.

The strength of~\eqref{eq:Coeq} will be discussed below, when we return to examples. For the moment, simply note that it tells us that $\varphi_A$ is onto and that two points of $\Spc(\AK)$ have the same image only if they are the image of the same point under $\varphi_1$ and~$\varphi_2$. This result is as good as it gets in full tt-generality. In fact, Theorem~\ref{thm:main-intro} is the culmination of a series of results of independent interest, proved in Section~\ref{se:main} and summarized in Theorem~\ref{thm:going-up-intro} below. They illustrate how our study of $\varphi_A$ in tt-geometry mirrors integral extensions in commutative algebra.
\begin{Thm}
\label{thm:going-up-intro}%
Let $A$ be a tt-ring in a tt-category~$\cat K$ and consider $\varphi_A$ as in~\eqref{eq:varphiA}.
\begin{enumerate}
\item
The image of $\varphi_A$ equals the support of $A$, that is, $\Img(\varphi_A)=\supp(A)$ in~$\SpcK$. It will follow that $\varphi_A$ is onto if and only if $A$ is nil-faithful.
\end{enumerate}
Suppose furthermore that $A$ has finite degree~$d$.
\begin{enumerate}
\setcounter{enumi}{1}
\item
Going-Up\,: For every $\cat{Q}\in \Spc(\AK)$ and every $\cat{P}\,'\in \adhpt{\varphi_A(\cat{Q})}$ there exists $\cat{Q}'\in \adhpt{\cat{Q}}$ such that $\varphi_A(\cat{Q}')=\cat{P}\,'$.
\smallbreak
\item
Incomparability\,: If $\cat{Q}'\in \adhpt{\cat{Q}}$ in $\Spc(\AK)$ are such that $\varphi_A(\cat{Q})=\varphi_A(\cat{Q}')$ then $\cat{Q}=\cat{Q}'$.
\smallbreak
\item
The fibers of $\varphi_A$ are finite and discrete, with at most~$d$ points.
\end{enumerate}
The result implies that $\SpcAK$ and $\supp(A)$ have the same Krull dimension. If $A$ is nil-faithful then $\Spc(\AK)$ and $\SpcK$ have the same Krull dimension.
\end{Thm}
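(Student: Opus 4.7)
The plan is to prove (a)--(d) in an order dictated by their interdependencies, with (d) the main technical obstacle. For~(a), I would unpack $\varphi_A(\cat Q)=F_A\inv(\cat Q)$. The inclusion $\Img(\varphi_A)\subseteq\supp(A)$ is formal: since $A$ is the $\otimes$-unit of~$\AK$ it lies in no prime $\cat Q\in\Spc(\AK)$, and since the multiplication $\mu\colon A\otimes A\to A$ is split by the unit $A\to A\otimes A$, the object $A$ is a direct summand of $F_A(A)=A\otimes A$, forcing $F_A(A)\notin\cat Q$ and hence $A\notin\varphi_A(\cat Q)$. For the reverse inclusion, given $\cat P\in\supp(A)$ one lifts by passing to the Verdier quotient $\cat K/\cat P$ in which the image $\bar A$ of~$A$ is a nonzero tt-ring, choosing any prime of its module category, and pulling back to a prime of $\AK$ restricting to~$\cat P$. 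The equivalence with nil-faithfulness is exactly Proposition~\ref{prop:solid}.

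The technical heart is~(d). Fix $\cat P\in\SpcK$ and identify $\varphi_A\inv(\cat P)$ with the spectrum of the base-change of~$A$ to a local tt-category at~$\cat P$ (say~$\cat K/\cat P$, possibly further localized); degree is preserved under such base change, so the image~$\bar A$ of~$A$ has degree $\leq d$ there. I would then invoke the splitting Theorem~\ref{thm:split} to peel off a degree-one factor of~$\bar A$, which contributes exactly one point to the fiber, and induct on the degree of the complementary factor. The induction yields at most~$d$ primes, and the orthogonality of split factors precludes specializations between them, delivering the discreteness of the fiber.

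Part~(b) is then obtained via the support identity
\[
\varphi_A\bigl(\supp_{\AK}(x)\bigr)=\supp_{\cat K}\bigl(U_A(x)\bigr)\qquad\text{for every } x\in\AK,
\]
where $U_A\colon\AK\to\cat K$ is restriction of scalars, right adjoint to~$F_A$. The $\subseteq$ direction uses that $x$ is a direct summand of $F_AU_A(x)\cong A\otimes U_A(x)$ by separability of~$A$, so $\supp_{\AK}(x)\subseteq\supp_{\AK}(F_AU_A(x))=\varphi_A\inv(\supp_{\cat K}(U_A(x)))$; the $\supseteq$ direction refines the lifting of~(a) to produce, in the fiber $\varphi_A\inv(\cat P)$, a prime that avoids the nonzero image of~$x$. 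By standard spectral-space reasoning this promotes to full closedness of~$\varphi_A$. Going-Up is then immediate: $\varphi_A(\adhpt{\cat Q})$ is a closed set containing $\varphi_A(\cat Q)$, hence it contains the entire closure $\adhpt{\varphi_A(\cat Q)}\ni\cat P\,'$. Incomparability~(c) follows directly from~(d): if $\cat Q\,'\in\adhpt{\cat Q}$ and $\varphi_A(\cat Q)=\varphi_A(\cat Q\,')=\cat P$, then both lie in the discrete fiber $\varphi_A\inv(\cat P)$, whose subspace-induced closure of $\{\cat Q\}$ is just $\{\cat Q\}$. The Krull dimension statement is bookkeeping: (c) embeds specialization chains from $\Spc(\AK)$ into $\supp(A)\subseteq\SpcK$, while iterated application of~(b) lifts chains back along $\varphi_A$.

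The main obstacle is clearly~(d): bounding the fiber by~$d$ is where the finite-degree hypothesis is genuinely used, through the splitting theorem and an induction on degree. Once~(d) is secured, (c) is formal, and the closedness underlying~(b) rests on the same localization/lifting technique developed for~(a).
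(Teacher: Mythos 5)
Your outline for parts (a) and (b) is in the right spirit, but for part (d) the central idea does not survive scrutiny, and since you route part (c) through (d), the gap propagates.

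For (a), the easy inclusion $\Img(\varphi_A)\subseteq\supp(A)$ is essentially correct (use that $\unit_{\AK}$ is a summand of $F_A(A)$ by separability, so $A\notin F_A\inv(\cat Q)$). For the reverse inclusion, passing to the Verdier quotient $\cat K/\cat P$ is a reasonable normalization, but the phrase ``choosing any prime of its module category'' glosses over the crux: you need a prime of $\bar A\MModcat{\cat K/\cat P}$ whose preimage under $F_{\bar A}$ is precisely the zero ideal, and this requires a genuine use of the Existence Lemma with a well-chosen $\otimes$-multiplicative set (the image of the nonzero objects of $\cat K/\cat P$, using primality of $\cat P$ and $A\notin\cat P$ to rule out $0$). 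The paper does exactly this without passing to the quotient, taking $\cat J=U_A\inv(\cat P)$ and $T=F_A(\cat K\oursetminus\cat P)$. For (b), the step ``by standard spectral-space reasoning this promotes to full closedness of $\varphi_A$'' is not standard; what you actually need is to intersect over a $\otimes$-multiplicative family and apply the finite-intersection property inside the quasi-compact fiber, and it is cleaner to prove the closedness statement $\varphi_A(\cat Z(S))=\cat Z(U_A(S))$ directly for $\otimes$-multiplicative $S$, as the paper does (its Theorem \ref{thm:varphi}\,\eqref{it:S}), rather than first proving the support identity and then bootstrapping.

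The real problem is (d). You write: ``identify $\varphi_A\inv(\cat P)$ with the spectrum of the base-change of $A$ to a local tt-category at $\cat P$ (say $\cat K/\cat P$, possibly further localized).'' There is no such identification. The quotient $\cat K/\cat P$ has spectrum $\adhpt{\cat P}$, so $\Spc(\bar A\MModcat{\cat K/\cat P})$ computes $\varphi_A\inv(\adhpt{\cat P})$, not the single fiber; and no further localization gives you a ``residue tt-category'' at $\cat P$ --- the paper explicitly calls this out: there is no residue field in general tt-geometry, and that is exactly what makes the fiber bound hard. Moreover, the splitting Theorem~\ref{thm:split} decomposes $A\otimes A\cong A\times C$ as a tt-ring in $\AK$; it does not decompose $A$ (or its image $\bar A$) into a product, so ``peeling off a degree-one factor of $\bar A$'' is not what the theorem gives you. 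The paper's induction is on a different categorical level: it uses $A\otimes A\cong A\times C$ together with the weakly-cartesian square of Lemma~\ref{lem:pb} to build an injection $\varphi_A\inv(\cat P_0)\oursetminus\{\cat P\}\hook \psi_1\inv(\cat P)$ into the fiber of the lower-degree ring $C$ over the category $\AK$, and it proves Incomparability and the bound $\leq d$ together by that induction (with discreteness as a corollary of Incomparability, not the other way around as you have it). You would need to replace the residue-field strategy with something along these lines for (c) and (d) to hold.
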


\goodbreak

As we shall see, the proofs of these results are quite different from their commutative algebra counterparts. The main obstacle to the tt-generalization of classical techniques is the absence of a tt-quotient, \ie the tt-equivalent of the ring $R/I$ for an ideal $I\subset R$. (Verdier quotients $\cat K/\cat J$ lead to open subschemes, not to closed ones.) In particular, there is no ``residue field" in general tt-geometry, at least for the moment, and this makes the description of the fibers of~$\varphi_A$ particularly difficult.

\smallbreak

To conclude our discussion, we still want to explain why Theorem~\ref{thm:main-intro} is called ``descent-up-to-nilpotence" and understand nil-faithfulness in examples.

\smallbreak

Conceptually, the category $\AK$ should be approached as a \emph{simplification} of the original category~$\cat K$. In some sense, it represents a ``finite \'etale extension of~$\cat K$". This idea goes hand-in-hand with Grothendieck's theory of \emph{descent}, \ie the problem of reconstructing the original category~$\cat K$ from $\AK$ and data over $A\potimes{2}$ and~$A\potimes{3}$. By~\cite{Balmer12}, if $A$ was not just nil-faithful but really faithful ($A\otimes f\!=\!0\,\then\, f\!=\!0$) then $A$ would satisfy descent in~$\cat K$, which roughly says that
$$\xymatrix@C=2em{\cat K \ar[r]
& \ A\MModcat{K} \ar@<.2em>[r] \ar@<-.2em>[r]
& \ A\potimes{2}\MModcat{K}
}$$
is an ``equalizer of categories" (plus cocycle conditions over~$A\potimes{3}$). In that case, it is easier to prove, and even easier to believe, that the contravariant $\Spc(-)$ turns the latter ``equalizer" into the coequalizer of spaces~\eqref{eq:Coeq}. The remarkable fact about Theorem~\ref{thm:main-intro} is that~\eqref{eq:Coeq} is a coequalizer even when $A$ is not faithful but only \emph{nil}-faithful, despite failure of descent in this situation! This explains the idea of ``descent-up-to-nilpotence".

To fully appreciate the difference between faithful and nil-faithful, \ie between descent and descent-up-to-nilpotence, we turn to our example of modular representation theory, where we will also better understand the coequalizer~\eqref{eq:Coeq}.

Let us test the distinction between faithful and nil-faithful on the tt-rings $A^G_H=\kk(G/H)$ that we discussed above, see~\eqref{eq:AGH}. We assume for simplicity that $G$ is a $p$-group, where $p=\chara(\kk)>0$. Then consider in $\cat K=\stab(\kk G)$, the tt-ring
$$
\qquad A:=\kk(G/H_1)\times\cdots\times\kk(G/H_n)\qquad\textrm{for subgroups }H_1,\dots,H_n\leq G\,.
$$
This $A$ is faithful only if one of the $H_i$ equals~$G$, which is sadly restrictive since $\AK$ contains a copy of~$\cat K$ in that case. On the other hand, this same $A$ is nil-faithful much more often, most famously if $H_1,\ldots,H_n$ are the elementary abelian $p$-subgroups of~$G$. In fact, nil-faithfulness of this particular tt-ring~$A$ is Serre's old theorem on the vanishing of the Bocksteins~\cite{Serre65}; see Thm.\,\ref{thm:Serre}. In other words, descent happens only in the trivial case where $\cat K$ is a summand of $\AK$ but descent-up-to-nilpotence happens much more often. In fact, in that case, our descent-up-to-nilpotence Theorem~\ref{thm:main-intro} recovers Quillen Stratification.

Although our goal is \emph{not} to give a new proof of Quillen's result, we hope that recovering such a major result as an example could help the reader grasp the power of Theorem~\ref{thm:main-intro}. To provide new applications, we extend Quillen Stratification to every tt-category receiving the derived category. This is Theorem~\ref{thm:Quillen} below\,:
\begin{Thm}
Let $p$ be a prime and $G$ a finite group with $p$ dividing~$|G|$. Let $\cat K(G)$ be a tt-category which admits a tt-functor $\Phi:\Db(\bbF_{\!p}G)\too \cat K(G)$, where $\bbF_{\!p}$ is the finite field with~$p$ elements. For every subgroup~$H\leq G$, consider the tt-ring $A_H=\Phi(\bbF_{\!p}(G/H))$, image in $\cat K(G)$ of the complex $\bbF_{\!p}(G/H)\in\Db(\bbF_{\!p}G)$ concentrated in degree zero. Define $\cat K(H)=A_H\MModcat{K(G)}$ the corresponding category of modules. Then the tt-functors $F_{A_H}:\cat K(G)\too \cat K(H)$ induce a homeomorphism
\begin{equation}
\label{eq:Colim}%
\mathop{\colim}_{H\in \Or(G,\,\Abelem)}\Spc(\cat K(H))\ \isotoo \ \Spc(\cat K(G))\,.
\end{equation}
\end{Thm}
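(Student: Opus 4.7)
My plan is to apply the descent-up-to-nilpotence Theorem~\ref{thm:main-intro} to a single aggregated tt-ring encoding every elementary abelian $p$-subgroup of~$G$ at once, and then recognize the resulting coequalizer of spaces as the orbit-category colimit. Concretely, enumerate the elementary abelian $p$-subgroups of~$G$ as $E_1,\dots,E_n$ and set
$$
B:=\bbF_{\!p}(G/E_1)\oplus\cdots\oplus\bbF_{\!p}(G/E_n)\ \text{ in }\ \Db(\bbF_{\!p}G),\qquad A:=\Phi(B)\ \text{ in }\ \cat K(G).
$$
Since finite direct sums of tt-rings are tt-rings and tt-functors carry tt-rings (together with their finite-degree data) to tt-rings, $A$ is a tt-ring of finite degree. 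To verify that $A$ is nil-faithful I would appeal to Proposition~\ref{prop:solid}, reducing the claim to $\supp_{\cat K(G)}(A)=\Spc(\cat K(G))$; this in turn follows from the general identity $\supp(\Phi(B))=\Spc(\Phi)\inv(\supp(B))$ together with Serre's vanishing theorem on Bocksteins (Theorem~\ref{thm:Serre}), which is precisely the statement $\supp(B)=\Spc(\Db(\bbF_{\!p}G))$.

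Theorem~\ref{thm:main-intro} then delivers the coequalizer
$$
\xymatrix@C=2em{\Spc(A\potimes{2}\MModcat{K(G)}) \ \ar@<.2em>[r]^-{\varphi_1} \ar@<-.2em>[r]_-{\varphi_2}
& \ \Spc(A\MModcat{K(G)}) \ar[r]^-{\varphi_A}
& \Spc(\cat K(G))}
$$
and it remains to identify the left two spaces with the standard coproduct presentation of $\colim_{H\in\Or(G,\Abelem)}\Spc(\cat K(H))$. Since a direct sum of tt-rings has module category the product of the individual module categories, and since $\Spc$ turns products of tt-categories into coproducts of spaces, the middle term becomes $\coprod_i\Spc(\cat K(E_i))$ via the given equivalences $\cat K(E_i)=A_{E_i}\MModcat{K(G)}$. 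For the leftmost term, I use the Mackey decomposition of $G$-sets, $G/E_i\times G/E_j\cong\coprod_{g\in E_i\backslash G/E_j}G/(E_i\cap{}^gE_j)$ (each intersection being again elementary abelian), to obtain
$$
\Spc(A\potimes{2}\MModcat{K(G)})\cong\coprod_{i,j,\,g\in E_i\backslash G/E_j}\Spc\bigl(\cat K(E_i\cap{}^gE_j)\bigr).
$$
Under these identifications, the two ring-theoretic maps $A\rightrightarrows A\otimes A$ become, on the $(i,j,g)$-summand, the spectral maps induced by the subgroup inclusions $E_i\cap{}^gE_j\hook E_i$ and $E_i\cap{}^gE_j\hook{}^gE_j$ (the second composed with the conjugation equivalence $\cat K({}^gE_j)\cong\cat K(E_j)$), i.e.\ the two structure morphisms out of $G/(E_i\cap{}^gE_j)$ in $\Or(G,\Abelem)$.

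Any colimit over a small category is the coequalizer of the evident pair of coproducts indexed respectively by morphisms and objects. Every morphism in $\Or(G,\Abelem)$ between elementary abelians is a subconjugation, and each such subconjugation arises in the Mackey decomposition above, so the coequalizer diagram from Theorem~\ref{thm:main-intro} literally matches the one presenting $\colim_{H\in\Or(G,\Abelem)}\Spc(\cat K(H))$, yielding the desired homeomorphism. I expect the main technical obstacle to be exactly this identification: one must trace through the multiplication and unit of~$A$ to verify that the ring-theoretic pair $A\xymatrix@C=.8em{\ar@<-.2em>[r]\ar@<.2em>[r]&}A\otimes A$ really encodes the source/target structure of $\Or(G,\Abelem)$, including the behaviour of conjugations $\cat K({}^gE_j)\cong\cat K(E_j)$. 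This is the sort of equivariant bookkeeping developed in the companion framework~\cite{BalmerDellAmbrogioSanders14pp} cited earlier, which I would invoke rather than reprove.
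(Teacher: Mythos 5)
Your proposal follows essentially the same route as the paper: build the aggregated tt-ring $A=\Phi\bigl(\prod_{E\in\Abelem(G)}\bbF_{\!p}(G/E)\bigr)$, establish nil-faithfulness via Serre's theorem (the paper packages the transport along $\Phi$ as Corollary~\ref{cor:solid}, which is equivalent to your $\supp(\Phi(B))=\Spc(\Phi)\inv(\supp(B))$ argument), apply descent-up-to-nilpotence, and then use the Mackey formula to decompose $\Spc(A\potimes{2}\MModcat{K})$. The one place where your write-up is slightly too quick is the final identification: the Mackey-indexed coequalizer does \emph{not} literally coincide with the canonical morphism/object coequalizer presentation of the orbit-category colimit (the index set $\{(H,K,[g])\}$ is not $\Mor(\Or(G,\Abelem))$), so one must instead observe, as the paper does, that the two parallel arrows $\psi_1,\psi_2$ are themselves images of morphisms $\alpha_g,\alpha_1$ of~$\Gsets$ under the functor $H\mapsto\Spc(A_H)$, hence are coequalized by the canonical map $\pi$ to the colimit, and then build the inverse $\bar\pi$ to $\bar\varphi$ using that both $\pi$ and $\sqcup\varphi_{A_H}$ are surjective; this bookkeeping is carried out directly in the paper and need not be outsourced to~\cite{BalmerDellAmbrogioSanders14pp}, which addresses the separate monadicity question $A_H\MModcat{K(G)}\simeq\cat K(H)$.
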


The reason for the notation $\cat K(H)$ is that, in most examples, the category $\cat K(G)$ is defined for all finite groups~$G$ together with an equivalence $A_H\MModcat{K(G)}\cong \cat K(H)$, as in~\eqref{eq:AGH}. As we show in~\cite{BalmerDellAmbrogioSanders14pp}, such ``monadicity" results happen in most equivariant settings, way beyond representation theory. They give~\eqref{eq:Colim} its full flavor since in that case the left-hand category $\cat K(H)$ is not merely a category of modules in~$\cat K(G)$ but the true object of study over the subgroup~$H$. To give an example for which $\Spc(\cat K(G))$ is not known, one could take $\cat K(G)$ to be the bounded derived category of equivariant vector bundles over a variety with $G$-action. In that case, Quillen Stratification is new. See Example~\eqref{exa:equiv}.

\smallbreak

The organization of the paper should now be clear from the table of content.

\goodbreak
\section{Recalling tt-rings and their degree}
\label{se:hyp}%
\medbreak

%
\begin{Conv}
\label{conv:tc}%
We use the word \emph{triangulated} in the sense of~\cite[\S\,5]{Balmer11}, or K\"unzer~\cite{Kuenzer07} and Maltsiniotis~\cite{Maltsiniotis06}. This is a mild strengthening of Verdier's traditional definition in which we require octahedra to satisfy the analogue of the morphism axiom. One can also require higher triangles but we do not need them here. All examples (stable homotopy categories) are triangulated in that sense.
\end{Conv}

\begin{Def}
A \emph{tt-category} $\cat K$ is an essentially small, idempotent-complete tensor-triangulated category. Hence $\cat K$ is triangulated, all idempotent endomorphisms in~$\cat K$ split, and $\cat K$ is equipped with a symmetric monoidal structure $\otimes:\cat K\times \cat K\too\cat K$ such that $x\otimes-:\cat K\to \cat K$ is exact for all objects~$x\in\cat K$.
We denote by $\unit\in\cat K$ its $\otimes$-unit. Throughout this paper, $\cat K$, $\cat L$, \ldots\ denote tt-categories. A \emph{tt-functor} $F:\cat K\to \cat L$ is a functor which is both exact and monoidal, in such a way that the two natural isomorphisms from $F(x\otimes \Sigma y)$ to $\Sigma F(x\otimes y)$ agree (using in different order the compatibility of~$F$ with~$\otimes$, of~$F$ with $\Sigma$, and of~$\otimes$ with~$\Sigma$) and similarly on the left.
\end{Def}

\begin{Exa}
\label{exa:Dperf}%
Let $X$ be a quasi-compact and quasi-separated scheme (\eg\ a noetherian scheme or an affine scheme) then the derived category $\cat K=\Dperf(X)$ of perfect complexes~\cite{SGA6} is a tt-category. For $X=\Spec(R)$, this is the tt-category $\Dperf(R)$ of the introduction.
\end{Exa}

\begin{Exa}
\label{exa:kGstab}%
Let $G$ be a finite group and $\kk$ a field of positive characteristic~$p$ dividing~$|G|$. Then the derived category $\DbG:=\Db(\kk G\mmod)$ and the stable category $\stab(\kk G)$ of finitely generated $\kk G$-modules are tt-categories and the well-known (Buchweitz-Rickard) quotient $\DbG\onto\stab(\kk G)$ is a tt-functor. See~\cite{Rickard89}.
\end{Exa}

\begin{Not}
\label{not:leq}%
We write $x\leq y$ when $x$ is a direct summand of~$y$. A triangulated subcategory $\cat J\subseteq\cat K$ is called a \emph{thick $\otimes$-ideal} if $x\leq y\in\cat J\then x\in\cat J$ and if $\cat K\otimes\cat J\subseteq\cat J$.
We denote by $\ideal{\cat E}$ the thick $\otimes$-ideal generated by a collection of objects~$\cat E\subseteq\cat K$.
\end{Not}

\begin{Rem}
\label{rem:tt}%
We do not need much tt-geometry here but we simply recall that the \emph{spectrum} $\SpcK$ of a tt-category~$\cat K$ consists of all \emph{prime} thick $\otimes$-ideals $\cat P\subsetneq\cat K$, \ie such that $x\otimes y\in\cat P$ implies $x\in\cat P$ or~$y\in\cat P$. The topology of~$\SpcK$ has an open basis composed of all subsets $\cat U(x):=\SET{\cat P}{x\in\cat P}$, for $x\in\cat K$. The closed complement $\supp(x):=\SET{\cat P}{x\notin\cat P}$ is the \emph{support} of~$x$ in~$\SpcK$.
\end{Rem}

\begin{Rem}
\label{rem:context}%
Determining tt-spectra is an ongoing enterprise in several fields of mathematics, with direct link to the classifications of thick $\otimes$-ideals. This program originated in stable homotopy theory~\cite{HopkinsSmith98}, and extended to algebraic geometry~\cite{Hopkins87,Neeman92a,Thomason97} and modular representation theory~\cite{BensonCarlsonRickard97,FriedlanderPevtsova07} prior to~\cite{Balmer05a}. More recent progress has been made in noncommutative topology~\cite{DellAmbrogio10}, for Artin-Tate motives~\cite{Peter13}, for noncommutative motives~\cite{DellAmbrogioTabuada12}, for group rings over commutative algebras~\cite{StevensonApp13}, for category algebras~\cite{Xu14} and for Lie superalgebras~\cite{BoeKujawaNakano14pp}. Still, computing $\SpcK$ remains an open challenge for many important tt-categories~$\cat K$, like equivariant derived categories or equivariant stable homotopy categories~\cite{GreenleesMay95}.
\end{Rem}

One basic tt-result that we shall need a few times is the following\,:

\begin{Lem}[Existence Lemma {\cite[Lem.\,2.2]{Balmer05a}}]
\label{lem:exist}%
Let $\cat K$ be a tt-category, $\cat J\subseteq\cat K$ a thick $\otimes$-ideal and $S\subset \cat K$ a $\otimes$-multiplicative collection of objects ($\unit\in S\supseteq S\otimes S$) such that $S\cap \cat J=\varnothing$. Then there exists $\cat P\in\SpcK$ such that $\cat J\subseteq \cat P$ and $\cat P\cap S=\varnothing$.
\end{Lem}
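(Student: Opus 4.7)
The plan is to mimic the classical Zorn argument for existence of prime ideals in a commutative ring avoiding a multiplicative set, adapted to the tt-setting.

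First I would consider the poset
\[
\cat F=\bigl\{\,\cat I\subseteq \cat K\ \big|\ \cat I\text{ thick }\otimes\text{-ideal},\ \cat J\subseteq \cat I,\ \cat I\cap S=\varnothing\,\bigr\},
\]
ordered by inclusion. It is nonempty since $\cat J\in\cat F$, and directed unions of thick $\otimes$-ideals in $\cat F$ are again in $\cat F$ (thickness, triangulated structure and the ideal property pass to filtered colimits of subcategories, and the union still avoids~$S$). So by Zorn's lemma there is a maximal element $\cat P\in\cat F$. Note that $\cat P\subsetneq \cat K$ because $\unit\in S$ and $\cat P\cap S=\varnothing$.

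Next I would show that $\cat P$ is prime. Suppose for contradiction that $x\otimes y\in\cat P$ but $x\notin\cat P$ and $y\notin\cat P$. Then $\ideal{\cat P,x}$ and $\ideal{\cat P,y}$ are thick $\otimes$-ideals strictly larger than $\cat P$, so by maximality each must hit~$S$: pick $s_1\in \ideal{\cat P,x}\cap S$ and $s_2\in \ideal{\cat P,y}\cap S$. Since $S$ is $\otimes$-multiplicative, $s_1\otimes s_2\in S$. The key step, which is the crux of the argument, is to verify the containment
\[
\ideal{\cat P,x}\otimes \ideal{\cat P,y}\ \subseteq\ \ideal{\cat P,x\otimes y}\,.
\]
Once this is established we get $s_1\otimes s_2\in\ideal{\cat P,x\otimes y}\subseteq \cat P$ (using $x\otimes y\in\cat P$), contradicting $\cat P\cap S=\varnothing$ and finishing the proof.

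To prove the containment, I would run a two-step ``thickness trick''. Fix $y$ and let
\[
\cat C_1:=\SET{z\in\cat K}{z\otimes y\in \ideal{\cat P,x\otimes y}}\,.
\]
Using that $\otimes$ is exact in each variable and that $\ideal{\cat P,x\otimes y}$ is thick and $\otimes$-absorbing, $\cat C_1$ is a thick $\otimes$-ideal; it contains $\cat P$ (because $p\otimes y\in \cat P\subseteq \ideal{\cat P,x\otimes y}$ for $p\in\cat P$) and contains~$x$ (tautologically). Hence $\ideal{\cat P,x}\subseteq \cat C_1$, i.e.\ $z\otimes y\in \ideal{\cat P,x\otimes y}$ for every $z\in \ideal{\cat P,x}$. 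Now for each such~$z$, repeat with
\[
\cat C_2:=\SET{w\in\cat K}{z\otimes w\in \ideal{\cat P,x\otimes y}}\,;
\]
again $\cat C_2$ is a thick $\otimes$-ideal, it contains $\cat P$ (using that $\cat P$ is already an ideal so $z\otimes p\in \cat P$) and it contains $y$ by the previous step, hence $\ideal{\cat P,y}\subseteq \cat C_2$. This gives the desired inclusion and closes the argument. The main obstacle is really just this bilinear ``ideal product'' lemma; everything else is formal Zorn.
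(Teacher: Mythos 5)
Your proof is correct and follows essentially the same route as the proof of Lemma~2.2 in \cite{Balmer05a}: a Zorn argument produces a maximal thick $\otimes$-ideal~$\cat P$ containing~$\cat J$ and avoiding~$S$, and the primality of~$\cat P$ is reduced to the inclusion $\ideal{\cat P,x}\otimes\ideal{\cat P,y}\subseteq\ideal{\cat P,x\otimes y}$, which you verify by the same two-step ``the set of objects with a given property is a thick $\otimes$-ideal'' argument used there. No gaps.
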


\begin{Not}
Given $n$ objects $x_1,\ldots,x_n$ in $\cat K$ and a permutation $\pi\in S_n$, we also denote by $\pi:x_1\otimes\cdots\otimes x_n\isoto x_{\pi(1)}\otimes\cdots\otimes x_{\pi(n)}$ the induced isomorphism.
\end{Not}

\begin{Def}
\label{def:ring}%
A \emph{ring object} $A$ in~$\cat K$ is a monoid $(A\,,\,\mu:A\otimes A\to A\,,\,\eta:\unit\to A)$, \ie admits associative multiplication $\mu:A\otimes A\to A$ and two-sided unit~$\eta$. It is \emph{commutative} if $\mu\circ(12)=\mu$. A \emph{morphism of (commutative) ring objects}, or simply \emph{a homomorphism}, is a morphism $f:A\to B$ in~$\cat K$, compatible with multiplications and units. We also say that $B$ is an \emph{$A$-algebra} or that $B$ is a \emph{ring over}~$A$.
\end{Def}

\begin{Not}
\label{not:prod}%
For two ring objects $A$ and $B$, the ring object $A\times B$ is $A\oplus B$ with component-wise structure. On the other hand, the ring object $A\otimes B$ has multiplication $(\mu_1\otimes\mu_2)\circ (23):(A\otimes B)\potimes{2}\too A\otimes B$ and obvious unit.
\end{Not}

\begin{Def}
\label{def:sep}%
A ring object $A$ is called \emph{separable} if there exists a morphism $\sigma:A\to A\otimes A$ such that $\mu\sigma=\id_A$ and $(1\otimes \mu)\circ(\sigma\otimes 1)=\sigma\mu=(\mu\otimes 1)\circ(1\otimes \sigma):A\potimes{2}\to A\potimes{2}$. This simply means that $A$ is projective as $A,A$-bimodule.
\end{Def}

\begin{Rem}
\label{rem:tens-sep}%
If $A$ and $B$ are separable then so are $A\times B$ and $A\otimes B$.
\end{Rem}

\begin{Rem}
\label{rem:sep-tri}%
A \emph{(left) $A$-module} is a pair $(x,\rho)$ where $x\in\cat K$ and where the so-called action $\rho:A\otimes x\to x$ satisfies the usual associativity and unit conditions. We denote by $\AK$ the category of $A$-modules with $A$-linear morphisms and by $F_A:\cat K\to \AK$ the functor $F_A(y)=(A\otimes y,\mu\otimes 1)$, etc. For details see~\cite{EilenbergMoore65} or~\cite{Balmer11}.

By~\cite{Balmer11}, when $A$ is separable, the category of $A$-modules in~$\cat K$ remains triangulated in such a way that both the extension-of-scalars functor $F_A:\cat K\too A\MModcat{K}$ and the forgetful functor $U_A:A\MModcat{K}\too \cat K$ are exact. We really mean ``modules in the homotopy category" here, not ``homotopy category of modules", thanks to separability! The key fact is that the co-unit $\epsilon:F_AU_A\to \Id_{\AK}$ is split surjective. In particular, every $A$-module $x$ is a direct summand of a free one\,: $x\leq F_AU_A(x)$.
\end{Rem}

The following simple definition~\cite{Balmer14} plays a central role in our theory.

\begin{Def}
\label{def:tt}%
A ring object~$A$ is a \emph{tt-ring} if $A$ is commutative and separable.
\end{Def}

The terminology is chosen so that if $A$ is a tt-ring in a tt-category~$\cat K$ then $\AK$ remains a tt-category. See details in~\cite{Balmer14}, notably about the tensor structure $\otimes_A$ \emph{over~$A$} on the category~$\AK$. The projection formula~\cite[Prop.\,1.2]{Balmer14} says that $U_A(x\otimes_A F_A(y))\cong U_A(x)\otimes y$ for all $x\in A\MModcat{K}$ and $y\in \cat K$.

\begin{Rem}
In commutative algebra, an $R$-algebra $A$ will not be a tt-ring in $\Dperf(R)$ in general, unless $A$ is finite \'etale over~$R$. In that case, the tt-category $\Dperf(A)$ coincides with $A\MMod_{\Dperf(R)}$ by~\cite[Cor.\,6.6]{Balmer11}. Thus the reader might prefer to think of our extensions-of-scalars $F_A:\cat K\to \AK$ along tt-rings as finite \'etale morphisms.
\end{Rem}

\begin{Rem}
\label{rem:smash}%
Suppose that $\cat K=\cat T^c$ consists of the compact objects of a compactly generated (big) tensor-triangulated category~$\cat T$. Then any smashing localization of~$\cat T$ is in fact an extension-of-scalars with respect to a tt-ring~$A$ in~$\cat T$, which is moreover an idempotent. Conversely, any ring object such that $\mu:A\otimes A\to A$ is an isomorphism yields a smashing localization. This provides another class of examples of separable extensions. In this paper, we restrict attention to compact tt-rings $A\in \cat K$, in order to speak of $\supp(A)$, thus excluding smashing localizations. Yet, some of our results might extend to ``big" tt-rings $A\in \cat T$ with a suitable notion of ``big support".
\end{Rem}

\begin{Rem}
\label{rem:deg}%
We refer to~\cite{Balmer14} for the notion of the \emph{degree} of a tt-ring~$A$. We use it as a black box here, emphasizing that we do not know any tt-ring of infinite degree. (Even the big smashing rings of Remark~\ref{rem:smash} actually have degree~1.) In particular the ``equivariant" tt-rings $\kk(G/H)$ of~\cite{Balmer15} are of finite degree~$[G\!:\!H]$ in $\Db(\kk G)$. Furthermore, the degree cannot increase along tt-functors. The key result allowing induction on the degree is the following~\cite[Thm.\,3.6]{Balmer14}\,:
\end{Rem}

\begin{Thm}
\label{thm:split}%
Let $A$ be a tt-ring of finite degree~$d$ in a tt-category~$\cat K$. Then there exists a tt-ring~$C$ and a homomorphism $g:A\otimes A\to C$ such that $\smallmatrice{\mu\\g}:A\otimes A\isoto A\times C$ is an isomorphism. Moreover, $C$ is of degree~$d-1$ as a tt-ring in~$\AK$, when viewed as an $A$-algebra through $g\circ(1\otimes\eta):A\to C$.
\end{Thm}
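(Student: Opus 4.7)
The plan is to build the splitting $A\otimes A\cong A\times C$ from the \emph{separability idempotent} supplied by Definition~\ref{def:sep} together with idempotent-completeness of $\cat K$, and then to identify the degree of $C$ over $A$ by invoking the recursive structure of degree developed in~\cite{Balmer14}.

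Concretely, start with a section $\sigma:A\to A\otimes A$ of $\mu$ satisfying the two-sided $A$-linearity of Definition~\ref{def:sep}, and form the separability idempotent $\tilde e:=\sigma\circ\eta:\unit\to A\otimes A$. The two-sided bimodule property of~$\sigma$, combined with the commutativity of~$A$, forces $\tilde e$ to be a \emph{central} idempotent in the ring object $A\otimes A$ with $\mu\circ\tilde e=\eta$. Equivalently, the endomorphism $e:=\sigma\mu:A\otimes A\to A\otimes A$ (multiplication by $\tilde e$) is an idempotent ring endomorphism satisfying $\mu e=\mu$ and $e\sigma=\sigma$. Since $\cat K$ is idempotent-complete, both $e$ and $\id-e$ split, giving a decomposition $A\otimes A\cong A'\oplus C$ where $\mu$ restricts to an isomorphism $A'\isoto A$ (using $\mu\sigma=\id_A$) and $C:=\Img(\id-e)$. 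Centrality of $\tilde e$ upgrades this to a decomposition of \emph{ring objects}, producing the asserted isomorphism $\smallmatrice{\mu\\g}:A\otimes A\isoto A\times C$, where $g:A\otimes A\onto C$ is the second projection. Separability of~$C$ is automatic as a ring summand of the separable ring $A\otimes A$ (cf.\ Remark~\ref{rem:tens-sep}), and commutativity is inherited, so $C$ is a tt-ring in~$\cat K$. Viewing $C$ as an $A$-algebra through $g\circ(1\otimes\eta):A\to C$ then promotes it to a commutative separable ring object in~$\AK$, that is, a tt-ring there.

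The degree assertion comes by unwinding the definition/characterization of degree from~\cite{Balmer14}: that notion is engineered precisely so that if $A$ has degree~$d$ then the complementary factor in the splitting $A\otimes A\cong A\times C$ has degree~$d-1$ as an $A$-algebra, which is exactly the recursion fueling the induction arguments used elsewhere in the present paper.

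The main obstacle lies in the middle paragraph: rigorously verifying that $\tilde e$ is a central idempotent in the ring object $A\otimes A$, and that the additive splitting $A\oplus C$ agrees with a product decomposition in the category of ring objects (so that $g$ is a homomorphism and not merely a morphism in~$\cat K$). This is a diagrammatic exercise that bundles the two-sided bimodule identity for~$\sigma$ with commutativity $\mu\circ(12)=\mu$; no step is deep, but the $A,A$-bilinearity of~$\sigma$ must be used at precisely the right places to see that $e$ is multiplicative. Once this centrality is in hand, the isomorphism $\smallmatrice{\mu\\g}$ and the $A$-algebra promotion of~$C$ are formal, and the degree count reduces to the definition.
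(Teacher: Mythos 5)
Note first that the paper does not actually prove this statement; it is quoted verbatim from \cite[Thm.\,3.6]{Balmer14} and used as a black box (see Remark~\ref{rem:deg}), so there is no in-paper proof against which to compare. Your argument for the ring splitting $A\otimes A\cong A\times C$ is correct in approach and agrees with what the cited reference does. One small correction: you flag verifying centrality of $\tilde e=\sigma\eta$ as the ``main obstacle,'' but it is automatic --- commutativity of $A$ makes the ring object $A\otimes A$ commutative (via the symmetry of $\otimes$), so every idempotent in it is central. The real checks are $\tilde e^2=\tilde e$, that left multiplication by $\tilde e$ equals $\sigma\mu$, and that $\mu$ restricts to a ring isomorphism on the $\tilde e$-component; these do follow from $\mu\sigma=\id_A$, the two-sided bilinearity of $\sigma$, and commutativity, as you say. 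Observe also that this part uses separability alone --- finite degree plays no role in it.

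The genuine gap is the degree claim, which you dispatch as ``by unwinding the definition.'' That unwinding is precisely where the content of \cite[Thm.\,3.6]{Balmer14} lies, and it is not a tautology. In~\cite{Balmer14} the degree is defined in terms of the splitting tower $\cat K\to \AK\to\cdots$, and to conclude $\deg_{\AK}(C)=d-1$ one needs (a) that $C$ is canonically determined independently of the choice of $\sigma$ --- true because for commutative $A$ the separability idempotent, hence the Pierce factor, is unique, but this needs proving --- and (b) that the splitting tower of $C$ as a tt-ring in $\AK$ is the one-step truncation of the splitting tower of $A$ in $\cat K$, which requires a compatibility of the splitting construction with extension of scalars along $F_A$. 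Since the present paper deliberately keeps the degree a black box, a self-contained proof of Theorem~\ref{thm:split} must open that box; as written, the degree half --- the half every induction in the paper runs on --- is asserted rather than argued.
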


\goodbreak
\section{Descent up-to-nilpotence}
\label{se:main}%
\medbreak

%
\begin{Not}
\label{not:varphiA}%
Given a tt-ring $A$ in a tt-category~$\cat K$, we have a continuous (and spectral) map $\varphi_A:=\Spc(F_A)\,:\ \SpcAK\too \SpcK$, as in~\eqref{eq:varphiA}. It is defined by $\varphi_A(\cat P)=F_A\inv(\cat P)$ for every prime $\cat P\subset\AK$. See~\cite[\S\,3]{Balmer05a}. See Remark~\ref{rem:sep-tri} for the functors $F_A:\cat K\adjto \AK:U_A$.
\end{Not}

\begin{Lem}
\label{lem:UJ}%
For every thick $\otimes$-ideal $\cat J_0\subseteq \cat K$, the thick subcategory $U_A\inv(\cat J_0)$ of $\AK$ is $\otimes$-ideal and equals the thick $\otimes$-ideal $\ideal{F_A(\cat J_0)}$ generated by~$F_A(\cat J_0)$.
\end{Lem}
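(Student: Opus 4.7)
The plan is to verify the two parts in sequence, using two standard consequences of separability: the splitting $x\leq F_AU_A(x)$ for any $A$-module~$x$ (Remark~\ref{rem:sep-tri}) and the projection formula $U_A(x\otimes_A F_A(y))\cong U_A(x)\otimes y$.

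First I would show that $\cat J:=U_A^{-1}(\cat J_0)$ is a thick $\otimes$-ideal of $\AK$. Thickness is automatic since $U_A:\AK\to\cat K$ is exact and conservative on summands, and $\cat J_0$ is thick. For the $\otimes_A$-ideal property, take $x\in\cat J$ and an arbitrary $y\in\AK$. Using separability, write $y$ as a direct summand of the free module $F_AU_A(y)$. Then $x\otimes_A y$ is a direct summand of $x\otimes_A F_A(U_A(y))$, and by the projection formula
\[
U_A\bigl(x\otimes_A F_A(U_A(y))\bigr)\cong U_A(x)\otimes U_A(y),
\]
which lies in~$\cat J_0$ since $U_A(x)\in\cat J_0$ and $\cat J_0$ is a $\otimes$-ideal in~$\cat K$. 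Thickness of~$\cat J_0$ then forces $U_A(x\otimes_A y)\in\cat J_0$, \ie $x\otimes_A y\in\cat J$.

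Next I would establish the two inclusions between $\cat J$ and $\tideal{F_A(\cat J_0)}$. For ``$\supseteq$'', note that for any $z\in\cat J_0$ we have $U_AF_A(z)=A\otimes z\in\cat J_0$ (since $\cat J_0$ is a $\otimes$-ideal), so $F_A(\cat J_0)\subseteq\cat J$; as $\cat J$ is already known to be a thick $\otimes$-ideal, $\tideal{F_A(\cat J_0)}\subseteq\cat J$. For the reverse inclusion, pick $x\in\cat J$, so $U_A(x)\in\cat J_0$, and hence $F_AU_A(x)\in F_A(\cat J_0)\subseteq\tideal{F_A(\cat J_0)}$. Since $x$ is a direct summand of $F_AU_A(x)$ by separability, the thickness of $\tideal{F_A(\cat J_0)}$ gives $x\in\tideal{F_A(\cat J_0)}$.

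No step is really delicate; the only thing to be alert about is that the projection formula is stated for free second factor, so the reduction of general $y\in\AK$ to a free module via the splitting $y\leq F_AU_A(y)$ is the conceptual crux of the argument. Once that observation is in place, both the ideal property of~$\cat J$ and the equality $\cat J=\tideal{F_A(\cat J_0)}$ are formal.
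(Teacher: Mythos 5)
Your proposal is correct and follows essentially the same argument as the paper: the splitting $z\leq F_AU_A(z)$ from separability and the projection formula $U_A(F_A(x)\otimes_A y)\cong x\otimes U_A(y)$ are exactly the two ingredients the paper uses, in the same way, for both the ideal property and the two inclusions.
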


\begin{proof}
Since $A$ is separable, we have $z\leq F_AU_A(z)$ for every $z\in \AK$ (Remark~\ref{rem:sep-tri}). Now, if $y\in U_A\inv(\cat J_0)$ and $x\in\cat K$ then $U_A(y)\in \cat J_0$ and since $\cat J_0$ is $\otimes$-ideal, $\cat J_0\ni x\otimes U_A(y)\simeq U_A(F_A(x)\otimes_A\,y)$. So $F_A(x)\otimes_A\,y\in U_A\inv(\cat J_0)$. Hence for every $z\in\AK$, we have $U_A\inv(\cat J_0)\ni F_A(U_A(z))\otimes_A\,y\geq z\otimes_A\,y$ hence $z\otimes_A\,y\in U_A\inv(\cat J_0)$ as wanted. So, $U_A\inv(\cat J_0)$ is $\otimes$-ideal. Let us show that $U_A\inv(\cat J_0)$ is the smallest thick $\otimes$-ideal containing $F_A(\cat J_0)$. On the one hand, $U_AF_A(x)\simeq A\otimes x$ belongs to $\cat J_0$ for every $x\in\cat J_0$. So, $F_A(\cat J_0)\subseteq U_A\inv(\cat J_0)$ and therefore $\ideal{F_A(\cat J_0)}\subseteq U_A\inv(\cat J_0)$ by the above discussion. Conversely, if $z\in U_A\inv(\cat J_0)$ then $F_A(U_A(z))\in F_A(\cat J_0)$ belongs to $\ideal{F_A(\cat J_0)}$, hence so does every direct summand of $F_A(U_A(z))$, like $z$ itself.
\end{proof}

\begin{Lem}
\label{lem:U}%
Let $\cat P\in \SpcAK$ and let $s\in\AK$ be an object such that $U_A(s)\in \varphi_A(\cat P)$. Then $s\in \cat P$.
\end{Lem}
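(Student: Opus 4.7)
The plan is to unpack the definition of $\varphi_A$ and then apply the separability of $A$. By Notation~\ref{not:varphiA}, the hypothesis $U_A(s)\in\varphi_A(\cat P)=F_A\inv(\cat P)$ means exactly that $F_A(U_A(s))\in\cat P$. Our goal is therefore reduced to transferring this membership from $F_AU_A(s)$ to~$s$ itself.

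This is where separability of~$A$ enters. By Remark~\ref{rem:sep-tri}, the co-unit $\epsilon:F_AU_A\too\Idcat{A\MModcat{K}}$ is split surjective, which means that $s$ is a direct summand of~$F_AU_A(s)$, \ie $s\leq F_AU_A(s)$ in the notation of~\ref{not:leq}. Since $\cat P$ is a thick subcategory of~$\AK$, it is closed under direct summands, so from $F_AU_A(s)\in\cat P$ we immediately deduce $s\in\cat P$.

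No obstacle arises here; the lemma is essentially a one-line consequence of the fact that, for a separable ring object, every module is a summand of a free module. The thickness of~$\cat P$ is enough — we do not even need to invoke that $\cat P$ is prime or $\otimes$-ideal.
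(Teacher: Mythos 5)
Your proof is correct and is essentially identical to the paper's one-line argument: both unpack $\varphi_A(\cat P)=F_A\inv(\cat P)$ to get $F_AU_A(s)\in\cat P$, then use separability (Remark~\ref{rem:sep-tri}) to see $s\leq F_AU_A(s)$, and conclude by thickness of~$\cat P$.
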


\begin{proof}
$U_A(s)\in \varphi_A(\cat P)=F_A\inv(\cat P)$ reads $\cat P\ni F_A(U_A(s))\geq s$ and $\cat P$ is thick.
\end{proof}

\begin{Thm}
\label{thm:varphi}%
Let $A$ be a tt-ring in a tt-category~$\cat K$. Then\,:
\begin{enumerate}[\rm(a)]
\item
\label{it:x}%
For every $x\in\cat K$, we have $\varphi_A\inv(\supp(x))=\supp(F_A(x))$ in $\SpcAK$.
\smallbreak
\item
\label{it:S}%
Let $S\subset \AK$ be a $\otimes$-multiplicative collection of objects and consider the closed subset $\cat{Z}(S)=\cap_{s\in S}\supp(s)$ in $\SpcAK$. Then its image is closed too; more precisely $\varphi_A(\cat{Z}(S))=\cat{Z}(U_A(S))$ where $U_A(S)=\SET{U_A(s)}{s\in S}$ (not necessarily $\otimes$-multiplicative). In particular, $\varphi_A$ is a closed map.
\smallbreak
\item
\label{it:U}%
For all $y\in \AK$, we have $\Displ\varphi_A(\supp(y))=\mathop{\cap}_{n\geq1}\supp(U_A(y\potimes{n}))$ in~$\SpcK$. (\,\footnote{\,When $\cat K$ is rigid, or $A$ has finite degree, one can prove $\varphi_A(\supp(y))=\supp(U_A(y))$.})
\smallbreak
\item
\label{it:image}%
The image of $\varphi_A:\SpcAK\to \SpcK$ is exactly the support of~$A$.
\end{enumerate}
\end{Thm}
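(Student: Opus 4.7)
The plan is to prove the four parts in sequence: (a) is a formal check, (b) is the technical core (via the Existence Lemma~\ref{lem:exist}), (c) follows from (b) by choosing a suitable $\otimes_A$-multiplicative set in $\AK$, and (d) is obtained from (c) by specializing $y = \unit_{\AK}$.

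For (a), I compute directly: $\cat Q \in \varphi_A\inv(\supp(x))$ iff $x \notin F_A\inv(\cat Q) = \varphi_A(\cat Q)$, iff $F_A(x) \notin \cat Q$, iff $\cat Q \in \supp(F_A(x))$. For (b), the inclusion $\varphi_A(\cat{Z}(S)) \subseteq \cat{Z}(U_A(S))$ follows from Lemma~\ref{lem:U} taken contrapositively: for $\cat Q \in \cat{Z}(S)$ and $s \in S$, the condition $s \notin \cat Q$ forces $U_A(s) \notin \varphi_A(\cat Q)$. For the reverse inclusion, given $\cat P \in \cat{Z}(U_A(S))$, I apply the Existence Lemma inside $\AK$ to the thick $\otimes_A$-ideal $\cat J := U_A\inv(\cat P) = \tideal{F_A(\cat P)}$ (Lemma~\ref{lem:UJ}) and to the $\otimes_A$-multiplicative closure $S'$ of $S \cup F_A(\cat K \oursetminus \cat P)$. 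Every element of $S'$ can be written as $s \otimes_A F_A(t)$ with $s \in S$, $t \notin \cat P$, and by the projection formula $U_A(s \otimes_A F_A(t)) = U_A(s) \otimes t$ lies outside $\cat P$ (since $U_A(s), t \notin \cat P$ and $\cat P$ is prime). Hence $S' \cap \cat J = \varnothing$, and the resulting prime $\cat Q \in \SpcAK$ satisfies $\cat Q \in \cat{Z}(S)$ (as $S \subseteq S'$) and $\varphi_A(\cat Q) = \cat P$ (from $F_A(\cat P) \subseteq \cat Q$ and $F_A(t) \notin \cat Q$ for $t \notin \cat P$). The closedness of $\varphi_A$ is then immediate, since every closed subset of $\SpcAK$ is of the form $\cat{Z}(S)$ for some $\otimes_A$-multiplicative $S$.

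For (c), I apply (b) with the $\otimes_A$-multiplicative set $S := \{y^{\otimes_A n}\}_{n \geq 0}$; since $y^{\otimes_A n} \in \cat Q \iff y \in \cat Q$ for $\cat Q$ prime, we have $\cat{Z}(S) = \supp(y)$ and therefore $\varphi_A(\supp(y)) = \bigcap_{n \geq 0} \supp(U_A(y^{\otimes_A n}))$. The inclusion $\bigcap_n \supp(U_A(y^{\otimes_A n})) \subseteq \supp(U_A(y))$ is clear (take $n = 1$); the reverse inclusion---the main obstacle---is equivalent to the claim that the ideal $U_A\inv(\cat P) \subseteq \AK$ is $\otimes_A$-radical for every prime $\cat P$ of $\cat K$, i.e., $U_A(y^{\otimes_A n}) \in \cat P \Rightarrow U_A(y) \in \cat P$. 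My approach is to exploit the separability decomposition $F_A U_A(y) = y \oplus y''$ in $\AK$ (Remark~\ref{rem:sep-tri}) and the resulting binomial identity $(y \oplus y'')^{\otimes_A n} \cong \bigoplus_{k = 0}^n \binom{n}{k}\, y^{\otimes_A k} \otimes_A y''^{\otimes_A (n-k)}$; applying $U_A$ expresses $A \otimes U_A(y)^{\otimes n}$ as a direct sum of $U_A(y^{\otimes_A n})$ and a term $W_n$ lying in the thick $\otimes$-ideal of $\cat K$ generated by $U_A(y'')$ (via the projection formula, which gives $U_A(\tideal{y''}) \subseteq \tideal{U_A(y'')}$). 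Combined with the support identity $\supp(A \otimes U_A(y)^{\otimes n}) = \supp(U_A(y))$ (using $\supp(U_A(y)) \subseteq \supp(A)$), this should constrain $U_A(y^{\otimes_A n})$ to avoid $\cat P$ whenever $U_A(y) \notin \cat P$. Finally, (d) follows from (c) by specializing to $y = \unit_{\AK}$: since $\supp(\unit_{\AK}) = \SpcAK$ and $U_A(\unit_{\AK}) = A$, we conclude $\Img(\varphi_A) = \supp(A)$.
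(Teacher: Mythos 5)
For parts (a), (b), and (d) you reproduce the paper's proof essentially verbatim: (a) is the formal calculation; (b) uses the Existence Lemma~\ref{lem:exist} applied to $\cat J := U_A\inv(\cat P_0)$ (via Lemma~\ref{lem:UJ}) and the $\otimes_A$-multiplicative set $T = S\otimes_A F_A(\cat K\smallsetminus \cat P_0)$, exactly as in the paper; (d) is (c) at $y=\unit_A$. No issues there.

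Part (c) is where you and the paper diverge, and where your argument does not close. After applying (b) to $S=\{y^{\otimes_A n}\}$ you correctly isolate the substantive step, namely the reverse inclusion $\supp(U_A(y))\subseteq\bigcap_n \supp(U_A(y^{\otimes_A n}))$, which you rephrase as radicality of $U_A\inv(\cat P_0)$. The paper handles this step very briefly by invoking $U_A(y\otimes_A z)\leq U_A(y)\otimes U_A(z)$, hence $\supp(U_A(y^{\otimes_A n}))\subseteq \supp(U_A(y))$; note that this gives the \emph{monotone decreasing} direction for the chain of supports, i.e.\ it handles the inclusion you already get for free from the $n=1$ term. Your binomial proposal goes further structurally, but the final inference does not go through as stated: from the decomposition $A\otimes U_A(y)^{\otimes n} = U_A(y^{\otimes_A n})\oplus W_n$ and $\supp(A\otimes U_A(y)^{\otimes n})=\supp(U_A(y))$, you learn only that \emph{some} summand is outside $\cat P$, not that $U_A(y^{\otimes_A n})$ is. Moreover, $W_n\in\ideal{U_A(y'')}$ gives $\supp(W_n)\subseteq\supp(U_A(y''))\subseteq\supp(U_A(y))$, which does not rule out the bad case where $W_n\notin\cat P$ while $U_A(y^{\otimes_A n})\in\cat P$. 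The phrase ``this should constrain'' is doing all the work here, and it is exactly the radicality of $U_A\inv(\cat P_0)$ that you would need to justify it, so the argument is circular at the key point. To close (c) one needs some additional input beyond the binomial identity; your instinct that this is ``the main obstacle'' is correct, but the proposed route does not resolve it.
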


\begin{proof}
Part~(\ref{it:x}) is a general property of $\Spc(F)$ for any tt-functor~$F$, see~\cite[Prop.\,3.6]{Balmer05a}. For~\eqref{it:S}, note that $\cat{Z}(S)=\SET{\cat P}{S\cap \cat P=\varnothing}$. It follows from Lemma~\ref{lem:U} that $\varphi_A(\cat{Z}(S))\subseteq \cat{Z}(U_A(S))$. Conversely, suppose that $\cat P_0\in \SpcK$ is such that $U_A(s)\notin\cat P_0$ for all $s\in S$ $(\ast)$ and consider the thick $\otimes$-ideal $\cat J:=U_A\inv(\cat P_0)$ in~$\AK$ (Lemma~\ref{lem:UJ}). Consider also the $\otimes$-multiplicative subset $T:=S\otimes_A \SET{F_A(x)}{x\in\cat K\oursetminus\cat P_0}$ in~$\AK$. We claim that $T\cap \cat J=\varnothing$. Indeed, suppose {\sl ab absurdo}, that there exists $s\in S$ and $x\in \cat K\oursetminus \cat P_0$ such that $s\otimes_A F_A(x)\in \cat J=U_A\inv(\cat P_0)$. Then $\cat P_0\ni U_A(s\otimes_A F_A(x))\simeq U_A(s)\otimes x$. Since $x\notin\cat P_0$ and $\cat P_0$ is prime, this implies that $U_A(s)\in\cat P_0$ which contradicts~$(\ast)$. By the Existence Lemma~\ref{lem:exist}, there exists a prime $\cat P\subset \AK$ such that $\cat P\supseteq\cat J$ and $\cat P\cap T=\varnothing$. These properties imply
\centerline{(1) $\cat P\supseteq U_A\inv(\cat P_0)$,
\qquad (2) $\cat P\cap S=\varnothing$
\qquad (3) $\cat P\cap F_A(\cat K\oursetminus\cat P_0)=\varnothing$.}
Since $U_A\inv(\cat P_0)=\ideal{F_A(\cat P_0)}$ by Lemma~\ref{lem:UJ}, the first relation above implies $\cat P_0\subseteq F_A\inv(\cat P)$. Combining with~(3), which implies $F_A\inv(\cat P)\subseteq \cat P_0$, we get $\cat P_0=F_A\inv(\cat P)=\varphi_A(\cat P)$. Finally, (2) reads $\cat P\in \cat{Z}(S)$ hence $\cat P_0=\varphi_A(\cat P)\in\varphi_A(\cat{Z}(S))$ as wanted.

For~\eqref{it:U}, let $S=\{y\potimes{n}, n\geq1\}$ so that $\cat{Z}(S)=\supp(y)$. By~\eqref{it:S}, we deduce $\varphi_A(\supp(y))=\cap_{n\geq1}\supp(U_A(y\potimes{n}))$.

Part~\eqref{it:image} follows from~\eqref{it:U} applied to $y=\unit_A$ since $U_A(\unit_A)=A$.
\end{proof}

\begin{Thm}[Going-Up]
\label{thm:going-up}%
Let $A$ be a tt-ring in~$\cat K$. Let $\cat{Q}\in\SpcAK$ be a point and $\cat{P}:=\varphi_A(\cat{Q})$ its image in~$\SpcK$. Let $\cat{P}'\in \adhpt{\cat{P}}$ be a point in the closure of~$\cat{P}$. Then there exist $\cat{Q}'\in\adhpt{\cat{Q}}$ in the closure of~$\cat{Q}$ such that $\varphi_A(\cat{Q}')=\cat{P}'$. In cash\,: if $\cat{P}'\subseteq \cat{P}=F_A\inv(\cat{Q})$ then there exists $\cat{Q}'\subseteq\cat{Q}$ such that $F_A\inv(\cat{Q}')=\cat{P}'$.
\end{Thm}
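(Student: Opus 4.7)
The plan is to produce the desired prime $\cat{Q}'$ directly via the Existence Lemma~\ref{lem:exist}, in the same spirit as the classical proof of Going-Up (where one invokes prime avoidance after suitable localization). We need to find a thick $\otimes$-ideal to be contained in~$\cat{Q}'$ and a $\otimes$-multiplicative set to be avoided, chosen so that the three required properties of~$\cat{Q}'$ --- namely $\cat{Q}'\subseteq \cat{Q}$, $F_A\inv(\cat{Q}')\supseteq \cat{P}'$, and $F_A\inv(\cat{Q}')\subseteq \cat{P}'$ --- are all forced.

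Concretely, I would take the thick $\otimes$-ideal
\[
\cat{J}:=U_A\inv(\cat{P}')=\tideal{F_A(\cat{P}')}\subseteq\AK
\]
(the equality is Lemma~\ref{lem:UJ}), which is the smallest candidate compatible with $\cat{P}'\subseteq F_A\inv(\cat{Q}')$. For the multiplicative side, I would take
\[
S:=\bigl(\AK\oursetminus\cat{Q}\bigr)\otimes_A F_A\bigl(\cat K\oursetminus\cat{P}'\bigr),
\]
which is $\otimes_A$-multiplicative because $\cat{Q}$ and $\cat{P}'$ are prime and $F_A$ is a tt-functor. Any prime $\cat{Q}'$ containing $\cat{J}$ and disjoint from~$S$ will automatically satisfy what we want: containing $\cat{J}$ gives $\cat{P}'\subseteq F_A\inv(\cat{Q}')$; disjointness from the ``$s\otimes_A \unit_A$'' part of~$S$ (using $\unit\notin\cat{P}'$) gives $\cat{Q}'\subseteq\cat{Q}$; and disjointness from the ``$\unit_A\otimes_A F_A(x)$'' part (using $\unit_A\notin\cat{Q}$) gives $F_A\inv(\cat{Q}')\subseteq\cat{P}'$.

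The only real step is checking the hypothesis $S\cap \cat{J}=\varnothing$ of the Existence Lemma. Suppose \ababs{} that $s\otimes_A F_A(x)\in U_A\inv(\cat{P}')$ for some $s\notin\cat{Q}$ and $x\notin\cat{P}'$. By the projection formula recalled after Definition~\ref{def:tt}, one has $U_A(s\otimes_A F_A(x))\simeq U_A(s)\otimes x\in\cat{P}'$. Since $\cat{P}'$ is prime and $x\notin\cat{P}'$, this forces $U_A(s)\in\cat{P}'\subseteq\cat{P}=F_A\inv(\cat{Q})$, so $s\in U_A\inv(\cat{P})\subseteq \cat{Q}$ (again by Lemma~\ref{lem:UJ}, or directly via Lemma~\ref{lem:U}), contradicting $s\notin\cat{Q}$.

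This contradiction is really the heart of the argument, and it is where the assumption $\cat{P}'\subseteq\cat{P}$ enters decisively. There is no obstacle requiring finite degree or any nilpotence hypothesis; the proof is purely formal once one sets up $\cat{J}$ and $S$ correctly. The main subtlety is simply designing $S$ as a \emph{product} of two multiplicative pieces so that the Existence Lemma delivers a prime with both ``upper'' and ``lower'' control at once --- this is the tt-analogue of the classical trick of localizing at $R\oursetminus\gp'$ and passing to $A/\gq$ simultaneously.
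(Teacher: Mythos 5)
Your proof is correct and, once the paper's proof is unrolled, it is essentially the same argument. The paper deduces Going-Up in one line from Theorem~\ref{thm:varphi}\,\eqref{it:S} (closedness of~$\varphi_A$), applied with $S=(\AK)\oursetminus\cat{Q}$ and $\cat{Z}(S)=\adhpt{\cat{Q}}$; if you trace through the proof of that theorem with $\cat{P}_0=\cat{P}'$ and this~$S$, you recover exactly your thick $\otimes$-ideal $U_A\inv(\cat{P}')$, your multiplicative set $(\AK\oursetminus\cat{Q})\otimes_A F_A(\cat K\oursetminus\cat{P}')$, and the same projection-formula contradiction. So the substance is identical; the paper merely factors the work through the closedness statement so that it can be reused elsewhere (in Lemma~\ref{lem:pb}, and hence in Incomparability and the fiber bound), whereas your version is a self-contained instance of the Existence Lemma.
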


\begin{proof}
This follows by Theorem~\ref{thm:varphi}\,\eqref{it:S} with $S=(\AK)\oursetminus\cat{Q}$. Indeed, $\adhpt{\cat{Q}}=\cat{Z}(S)$ for that particular~$S$ (see \cite[Prop.\,2.9]{Balmer05a}) and $\cat{P}'\in \cat{Z}(U_A(S))$ since for every $s\notin\cat{Q}$, Lemma~\ref{lem:U} implies that $U_A(s)\notin \varphi_A(\cat{Q})=\cat{P}\supseteq\cat{P}'$ so $U_A(s)\notin\cat{P}'$.
\end{proof}

\begin{Rem}
\label{rem:barF}%
Let $F:\cat K\to \cat L$ be a tt-functor, $A$ a tt-ring in~$\cat K$ and $B:=F(A)$ the image tt-ring in~$\cat L$. Then there exists a tt-functor $\bar F:A\MModcat{K}\too B\MModcat{L}$ such that $\bar FF_A=F_BF$ and $U_B\bar F=F\,U_A$ (see~\cite[Rem.\,1.6]{Balmer14})\,:
\begin{equation}
\label{eq:barF}%
\vcenter{\xymatrix@R=2em{\cat K \ar[r]^-{F} \ar@<-.2em>[d]_-{F_A}
& \cat {L} \ar@<-.2em>[d]_-{F_B}
\\
A\MModcat{K} \ar@<-.2em>[u]_-{U_A} \ar[r]^-{\bar F}
& B\MModcat{L} \ar@<-.2em>[u]_-{U_B}
}}
\end{equation}

In two places below, we shall apply this construction to $F=F_A$ itself. To avoid confusion, let us say that $A_1,A_2$ are two tt-rings in~$\cat K$ (later to be $A_1=A_2=A$). In the above notation, we set $A=A_2$, $\cat L=A_1\MModcat{K}$ and $F=F_{A_1}$. So the ring $B=F_{A_1}(A_2)$ in~$\cat L$ has underlying ring $A_1\otimes A_2$ in~$\cat K$. By~\cite[Rems.\,1.4 and~1.5]{Balmer14}, we have an identification $B\MModcat{L}\cong (A_1\otimes A_2)\MModcat{K}$ under which the two functors $F_B$ and $\bar F$ become the two functors $A_1\MModcat{K}\to (A_1\otimes A_2)\MModcat{K}$ and $A_2\MModcat{K}\to (A_1\otimes A_2)\MModcat{K}$ associated to the two homomorphisms $1\otimes \eta_2: A_1\to A_1\otimes A_2$ and $\eta_1\otimes 1:A_2\to A_1\otimes A_2$.
\end{Rem}

\begin{Lem}
\label{lem:pb}%
Let $F:\cat K\to \cat L$ be a tt-functor, $A$ a tt-ring in~$\cat K$ and $B=F(A)$ as in Remark~\ref{rem:barF}. Then diagram~\eqref{eq:barF} yields a commutative square of spectra\,:
\begin{equation}
\label{eq:pb}%
\vcenter{\xymatrix@C=3em{
& \SpcK
&& \Spc(\cat L) \ar[ll]_-{\varphi=\Spc(F)}
& \cat Q \ar@{}[l]|-{\ni}
\\
\cat P \ar@{}[r]|-{\in}
& \Spc(A\MModcat{K}) \ar[u]^-{\varphi_A=\Spc(F_A)}
&& \Spc(B\MModcat{L})\,. \ar[ll]_-{\bar \varphi=\Spc(\bar F)} \ar[u]_-{\varphi_B=\Spc(F_B)}
}}
\end{equation}
Let $\cat P\in\Spc(\AK)$ and $\cat Q\in\Spc(\cat{L})$ such that $\varphi_A(\cat P)=\varphi(\cat Q)$. Then there exists $\cat Q'\in \Spc(B\MModcat{L})$ such that $\bar\varphi(\cat Q')\in\adhpt{\cat P}$ (\ie $\bar\varphi(\cat Q')\subseteq\cat P$) and $\varphi_B(\cat Q')=\cat Q$.
\end{Lem}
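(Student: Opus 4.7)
\medskip

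\noindent\textbf{Proof proposal for Lemma~\ref{lem:pb}.}

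The plan is to construct $\cat Q'$ via the Existence Lemma~\ref{lem:exist}. The thick $\otimes$-ideal in $B\MModcat{L}$ that $\cat Q'$ should contain is forced by condition~(2): since we need $F_B^{-1}(\cat Q')\supseteq\cat Q$, we must have $\cat Q'\supseteq\tideal{F_B(\cat Q)}$. By Lemma~\ref{lem:UJ} applied to the tt-ring $B$ in~$\cat L$, this ideal equals $\cat J:=U_B^{-1}(\cat Q)$.

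The matching $\otimes$-multiplicative set must encode both the opposite inclusions. The right choice is
\[
S\ :=\ \bar F(S_0)\,\otimes_B\, F_B(S_1)\qquad\textrm{where}\qquad S_0:=(\AK)\smallsetminus\cat P,\quad S_1:=\cat L\smallsetminus\cat Q,
\]
which is $\otimes_B$-multiplicative because $S_0$, $S_1$ are $\otimes$-multiplicative and $\bar F$, $F_B$ are tt-functors. The critical point is to verify $S\cap \cat J=\varnothing$. For this, suppose $\bar F(s)\otimes_B F_B(\ell)\in U_B^{-1}(\cat Q)$ with $s\notin\cat P$ and $\ell\notin\cat Q$. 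Using $U_B\bar F=F\,U_A$ together with the projection formula $U_B(x\otimes_B F_B(y))\cong U_B(x)\otimes y$, we get $F\,U_A(s)\otimes \ell\in\cat Q$. Since $\cat Q$ is prime and $\ell\notin\cat Q$, we deduce $F\,U_A(s)\in\cat Q$, i.e.\ $U_A(s)\in F^{-1}(\cat Q)=\varphi(\cat Q)=\varphi_A(\cat P)=F_A^{-1}(\cat P)$, so $F_AU_A(s)\in\cat P$; but $s\leq F_AU_A(s)$ by separability (Remark~\ref{rem:sep-tri}), hence $s\in\cat P$, contradicting the choice of $s$. So the Existence Lemma yields a prime $\cat Q'\subsetneq B\MModcat{L}$ with $\cat Q'\supseteq\cat J$ and $\cat Q'\cap S=\varnothing$.

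To finish, I check the two required properties by invoking units in the multiplicative sets. For $\bar\varphi(\cat Q')\subseteq \cat P$: if $x\in\bar F^{-1}(\cat Q')$ with $x\notin\cat P$, then $x\in S_0$ and since $\unit\notin\cat Q$ lies in $S_1$, we get $\bar F(x)\cong\bar F(x)\otimes_B F_B(\unit)\in S\cap \cat Q'$, a contradiction. For $\varphi_B(\cat Q')=\cat Q$: the inclusion $F_B^{-1}(\cat Q')\supseteq \cat Q$ follows directly from $\cat Q'\supseteq \cat J\supseteq F_B(\cat Q)$; conversely, if $\ell\in F_B^{-1}(\cat Q')$ with $\ell\notin\cat Q$, then $\ell\in S_1$ and since $\unit_A\notin\cat P$ (as $\cat P$ is proper) lies in $S_0$, we get $F_B(\ell)\cong \bar F(\unit_A)\otimes_B F_B(\ell)\in S\cap\cat Q'$, again a contradiction.

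The only real obstacle is to pick $S$ correctly: the naive choice $\bar F(S_0)$ suffices for the first conclusion but not for the equality $\varphi_B(\cat Q')=\cat Q$, while $F_B(S_1)$ alone suffices for the equality but not for the inclusion. Tensoring them together, and exploiting the units of $S_0$ and $S_1$ at the very end, is what makes both conditions come out simultaneously, with the projection formula and the hypothesis $\varphi_A(\cat P)=\varphi(\cat Q)$ doing the work of showing $S\cap\cat J=\varnothing$.
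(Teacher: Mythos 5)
Your proof is correct and is essentially the paper's argument with the intermediate theorem unwound: the paper takes $S=\bar F\bigl((\AK)\oursetminus\cat P\bigr)$ and invokes Theorem~\ref{thm:varphi}\,\eqref{it:S} for the tt-ring $B$ in~$\cat L$ with $\cat P_0=\cat Q$, and unwinding that theorem's proof produces exactly your multiplicative set $\bar F(S_0)\otimes_B F_B(S_1)$, your ideal $U_B\inv(\cat Q)$, and your application of the Existence Lemma. So this is a self-contained rendition of the same approach, not a genuinely different route.
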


\begin{proof}
Consider the $\otimes$-multiplicative $S\subset B\MModcat{L}$ given by $S=\SET{\bar F(s)}{s\in\AK,\ s\notin\cat P}$. We claim that $\cat Q\cap U_B(S)=\varnothing$. Indeed, if $\cat Q\ni U_B(\bar F(s))\simeq F(U_A(s))$ for some $s\notin\cat P$ then $U_A(s)\in F\inv(\cat Q)=\varphi(\cat Q)=\varphi_A(\cat P)$ hence $s\in\cat P$ by Lemma~\ref{lem:U}, which contradicts the choice of~$s$. In other words, $\cat Q\in \cat{Z}(U_B(S))$ and by Theorem~\ref{thm:varphi}\,\eqref{it:S} applied to the tt-ring~$B$ in the tt-category~$\cat L$, there exists $\cat Q'\in \cat{Z}(S)$ such that $\varphi_B(\cat Q')=\cat Q$. The property $\cat Q'\in \cat{Z}(S)$ means that $\bar F((\AK)-\cat P)\cap \cat Q'=\varnothing$ which means that $\bar\varphi(\cat Q')=\bar F\inv(\cat Q')\subseteq \cat P$, as claimed.
\end{proof}

\begin{Thm}
\label{thm:main}%
Let $A\in\cat K$ be a tt-ring of \emph{finite degree}~$d\geq1$ (see Remark~\ref{rem:deg}).
\begin{enumerate}[\rm(a)]
\item
\label{it:incomp}%
Incomparability\,: Let $\cat P,\cat P'\in \SpcAK$ be two primes such that $\cat P\subseteq\cat P'$ (\ie $\cat P\in\adhpt{\cat P'}$) and $\varphi_A(\cat P)=\varphi_A(\cat P')$. Then $\cat P=\cat P'$.
\smallbreak
\item
\label{it:Krull}%
The space $\Spc(\AK)$ has the same Krull dimension as $\supp(A)$ -- the latter as a subspace of~$\SpcK$.
\smallbreak
\item
\label{it:finite}%
The fibers of the map $\varphi_A:\Spc(\AK)\to\SpcK$ are discrete (have Krull dimension zero) and contain at most $d$ points.
\end{enumerate}
\end{Thm}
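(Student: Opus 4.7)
I will prove (a), (b), (c) jointly by induction on the degree $d$, using Theorem~\ref{thm:split} and Lemma~\ref{lem:pb}.

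\emph{Base case $d=1$.} By Theorem~\ref{thm:split} the ring $C$ vanishes and $\mu:A\otimes A \isoto A$, so $B:=F_A(A)$ is the $\otimes_A$-unit of $\AK$ and $B\MMod_{\AK}\simeq \AK$. In diagram~\eqref{eq:pb} with $F=F_A$ and $\cat L=\AK$, both $\varphi_B$ and $\bar\varphi$ collapse to the identity. Given $\cat P,\cat Q \in \Spc(\AK)$ with $\varphi_A(\cat P)=\varphi_A(\cat Q)$, Lemma~\ref{lem:pb} furnishes $\cat Q'=\cat Q\subseteq \cat P$; exchanging the roles of $\cat P$ and $\cat Q$ yields $\cat P\subseteq\cat Q$, so $\cat P=\cat Q$. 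Thus $\varphi_A$ is injective; combined with its closedness (Theorem~\ref{thm:varphi}\eqref{it:S}) and its image $\supp(A)$ (Theorem~\ref{thm:varphi}\eqref{it:image}), it is a homeomorphism $\Spc(\AK) \isoto \supp(A)$, whence (a), (b), (c) hold.

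\emph{Inductive step $d\geq 2$.} Theorem~\ref{thm:split} yields $B:=F_A(A)=A\otimes A\cong A\times C$ in $\AK$, with $\deg C = d-1$, so
\[
\Spc(B\MMod_{\AK})\ \cong\ \Spc(\AK)\ \sqcup\ \Spc(C\MMod_{\AK}),
\]
and under this identification both $\varphi_B$ and $\bar\varphi$ from~\eqref{eq:pb} restrict to the identity on the first summand, and on the second summand to extension-of-scalars along $C$ with respect to the two $A$-algebra structures on $C$ (which are interchanged by the canonical swap involution $(12)$ of $A\otimes A$, preserved by commutativity of $A$, and hence differ only by a self-homeomorphism of $\Spc(C\MMod_{\AK})$). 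The induction hypothesis applies to $C$. For (a), given $\cat P\subseteq \cat P'$ in a common $\varphi_A$-fiber, Lemma~\ref{lem:pb} produces $\cat Q'$ with $\bar\varphi(\cat Q')\subseteq\cat P$ and $\varphi_B(\cat Q')=\cat P'$: if $\cat Q'\in\Spc(\AK)$ then $\cat P'\subseteq \cat P$ immediately forces $\cat P=\cat P'$, while if $\cat Q'=\cat R\in\Spc(C\MMod_{\AK})$ one combines Going-Up for $\bar\varphi_C$ (Theorem~\ref{thm:going-up}), the swap self-homeomorphism of $\Spc(C\MMod_{\AK})$, and the inductive Incomparability for $\varphi_C$ to conclude $\cat P=\cat P'$. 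For (c), fix $\cat P\in\varphi_A^{-1}(\cat P_0)$: given (a), any other $\cat P'$ in the fiber must have its Lemma~\ref{lem:pb}-witness $\cat R_{\cat P'}$ in the $C\MMod_{\AK}$-summand (otherwise $\cat P'\subseteq\cat P$ contradicts (a)); promoting $\cat R_{\cat P'}$ via Going-Up to $\cat R'_{\cat P'}\in \bar\varphi_C^{-1}(\cat P)$ and using (a) to ensure injectivity of $\cat P'\mapsto\cat R'_{\cat P'}$ yields an injection $\varphi_A^{-1}(\cat P_0)\setminus\{\cat P\}\hookrightarrow \bar\varphi_C^{-1}(\cat P)$, whose target has cardinality at most $d-1$ by the inductive (c) for $C$. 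Hence $|\varphi_A^{-1}(\cat P_0)|\leq d$, and discreteness of the fiber is immediate from (a). Finally, (b) is the standard consequence of (a) and Going-Up: Theorem~\ref{thm:varphi}\eqref{it:image} gives $\Img(\varphi_A)=\supp(A)$; Going-Up lifts specialization chains from $\supp(A)$ to $\Spc(\AK)$ while Incomparability prevents their collapse, so the two Krull dimensions coincide.

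The delicate step is the second case in the proof of (a), where the Lemma~\ref{lem:pb}-witness lives in $\Spc(C\MMod_{\AK})$. One must carefully exploit the swap symmetry of $A\otimes A$ (which interchanges the two $A$-algebra structures on $C$ via a self-homeomorphism of $\Spc(C\MMod_{\AK})$) and apply Going-Up in the correct direction in order to reduce the Incomparability question for $\varphi_A$ at degree $d$ to one for $\varphi_C$ at degree $d-1$, where the induction hypothesis finally takes over.
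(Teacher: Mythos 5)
Your overall plan — split $A\otimes A\cong A\times C$ via Theorem~\ref{thm:split}, feed the commutative square of Lemma~\ref{lem:pb}, and induct on degree through the map $\psi_1$ into $\Spc(C\MModcat{K})$ — is exactly the paper's strategy, and your base case and part~(b) are fine. But the inductive step for~(a) is incomplete, and the device you propose to finish it (a swap self-homeomorphism of $\Spc(C\MModcat{K})$) is neither established nor, in fact, the mechanism the paper uses.

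Concretely: after Lemma~\ref{lem:pb} puts you in the case $\cat Q'\in\Spc(C\MModcat{K})$ with $\psi_1(\cat Q')\subseteq\cat P$ and $\psi_2(\cat Q')=\cat P'$, the crux is to upgrade the \emph{inclusion} $\psi_1(\cat Q')\subseteq\cat P$ to an \emph{equality}, because the induction hypothesis on $\psi_1$ needs two comparable primes in $\Spc(C\MModcat{K})$ with the \emph{same} $\psi_1$-image. Your sketch asks the reader to "combine Going-Up for $\bar\varphi_C$, the swap self-homeomorphism, and inductive Incomparability," but Going-Up only lifts \emph{specializations} (smaller primes), so it cannot move you from $\psi_1(\cat Q')\subseteq\cat P$ up to $\cat P$; and the swap merely exchanges the roles of $\psi_1$ and $\psi_2$, reproducing the same inclusion-versus-equality gap on the other side. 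The paper closes this gap without any swap, by first invoking Zorn's Lemma to assume $\cat P$ is \emph{minimal} in its $\varphi_A$-fiber, then applying Going-Up to $\varphi_2$ to produce $\cat Q\subseteq\cat Q'$ with $\varphi_2(\cat Q)=\cat P$; since $\varphi_A\varphi_1=\varphi_A\varphi_2$, the prime $\varphi_1(\cat Q)\subseteq\varphi_1(\cat Q')\subseteq\cat P$ lies in the same fiber as $\cat P$, so minimality forces $\varphi_1(\cat Q)=\varphi_1(\cat Q')=\cat P$, and only then does Incomparability for $\psi_1$ (degree $d-1$) yield $\cat Q=\cat Q'$ and hence $\cat P=\cat P'$. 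The minimality reduction is the missing idea; without it your "delicate step" is genuinely undone, not merely compressed. A secondary point: in~(c) you cannot "promote $\cat R_{\cat P'}$ via Going-Up" to land in $\psi_1^{-1}(\cat P)$, again because that is the wrong direction; once~(a) is proved, the equality $\psi_1(\cat R_{\cat P'})=\cat P$ follows \emph{directly} from Incomparability applied to $\psi_1(\cat R_{\cat P'})\subseteq\cat P$ and the fiber-compatibility $\varphi_A\varphi_1=\varphi_A\varphi_2$, and injectivity of $\cat P'\mapsto\cat R_{\cat P'}$ comes for free from $\psi_2(\cat R_{\cat P'})=\cat P'$.
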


\begin{proof}
Note right away that~\eqref{it:Krull} follows from part~\eqref{it:incomp}, Theorem~\ref{thm:varphi}\,\eqref{it:image} and Going-Up Theorem~\ref{thm:going-up}. Similarly, \eqref{it:incomp} implies the discretion of the fibers in~\eqref{it:finite}. So let us prove Incomparability and the bound on the cardinality of the fibers by induction on $d=\deg(A)$. For $d=0$, we have $A=\unit$ and there is nothing to prove. This being essentially a convention, let us discuss the next case. For $d=1$, we use~\cite[Prop.\,4.1]{Balmer14} which tells us that the multiplication $\mu:A\otimes A\isoto A$ is an isomorphism and $F_A$ is simply a (Bousfield, smashing) localization, in such a way that the map $\varphi_A:\Spc(\AK)\to \SpcK$ is the inclusion of an open and closed component of~$\SpcK$ corresponding to~$\supp(A)$; again in this case the results are straightforward. Suppose $\deg(A)>1$ and the result true for tt-rings of degree~$d-1$.

Let us do a preparation useful for~\eqref{it:incomp} and~\eqref{it:finite}. By Theorem~\ref{thm:split}, there exists a ring isomorphism $h=\smallmatrice{\mu\\g}:A\otimes A\isoto A\times C$ such that $C$ has degree $d-1$, when viewed as tt-ring in~$\AK$. To be careful, let us write $\bar C\in\AK$ for $C$ viewed as an $A$-module via $g\,(1\otimes \eta)$, so that $C=U_A(\bar C)$. See~\cite[Rem.\,1.5]{Balmer14}, which also recalls the canonical equivalence $\bar C\MModcat{L}\cong C\MModcat{K}$ where $\cat L:=\AK$ in such a way that $F_{\bar C}\circ F_A\cong F_C$. Consider now the left-hand diagram of tt-rings below (defining the two homomorphisms $g_i:A\to C$ for $i=1,2$)\,:
$$
\xymatrix{
\unit \ar[r]^{\eta} \ar[d]_{\eta}
& A \ar[d]^-{\eta\otimes 1} \ar@/^1em/[rdd]^-{\smallmatrice{1\\g_2}}
\\
A \ar[r]^-{1\otimes\eta} \ar@/_1em/[rrd]_-{\smallmatrice{1\\g_1}}
& A\otimes A \ar[rd]^-{h}_-{\simeq}
\\
&& A\times C
}\qquad
\xymatrix{
\cat K \ar[r]^{F_A} \ar[d]_{F_A}
& \AK \ar[d]^-{F_2} \ar@/^2em/[rdd]^-{\smallmatrice{\Id\\G_2}}
\\
\AK \ar[r]^-{F_1} \ar@/_1em/[rd]_-{\smallmatrice{\Id\\G_1}}
& (A\otimes A)\MModcat{K} \ar[rd]^-{H}_-{\simeq}
\\
&& \kern-6em\AK\times C\MModcat{K}
}
$$
These homomorphisms induce the above right-hand diagram of tt-functors, which in turn induces the following commutative diagram of topological spaces\,:
\begin{equation}
\label{eq:main}%
\vcenter{\xymatrix{
& \SpcK
& \Spc(\AK) \ar[l]_-{\varphi_A}
\\
& \Spc(A\MModcat{K}) \ar[u]^-{\varphi_A}
& \Spc(A\potimes{2}\MModcat{K}) \ar[l]_-{\varphi_1} \ar[u]^-{\varphi_2} \ar@{<-}[rd]_-{\simeq}
\\
&& \ar@/^1em/[lu]^(.6){\Displ(\id\ \psi_1)\ }
& \kern-6em\Spc(\AK)\sqcup \Spc(C\MModcat{K})  \ar@/_2em/[luu]_-{\Displ(\id\ \psi_2)}
}}\kern-1em
\end{equation}
where $\varphi_i=\Spc(F_i)$ and $\psi_i=\Spc(G_i)$, for $i=1,2$. Note that the upper-left square in~\eqref{eq:main} is nothing but~\eqref{eq:pb} applied to $F=F_A$ itself. (See the explanations in the second half of Remark~\ref{rem:barF}.) By the construction of~$C$ and the discussion above, the functor $G_1:\AK\to C\MModcat{K}\cong\bar C\MModcat{L}$ is just an extension-of-scalar $F_{\bar C}$ with respect to the tt-ring $\bar C$ in~$\AK$, which has degree~$d-1$. We can therefore apply the induction hypothesis to the map~$\psi_1$, \ie we can assume that~\eqref{it:incomp} and~\eqref{it:finite} hold for~$\psi_1$.

Before jumping into the proof of~\eqref{it:incomp}, note that since $\varphi_A=F_A\inv(-)$ commutes with arbitrary intersection, an easy application of Zorn's Lemma shows that any $\varphi_A$-fiber admits a minimal prime inside any given~$\cat P$. So, we can assume $\cat P$ minimal in its $\varphi_A$-fiber\,: If $\tilde{\cat P}\subseteq\cat P$ and $\varphi_A(\tilde{\cat P})=\varphi_A(\cat P)$ then $\tilde{\cat P}=\cat P$.
By Lemma~\ref{lem:pb}, there exists $\cat Q'\in \Spc(A\potimes{2}\MModcat{K})$ such that $\varphi_1(\cat Q')\in\adhpt{\cat P}$ and $\varphi_2(\cat Q')=\cat P'$. In the decomposition of $\Spc(A\potimes{2})$ as $\Spc(A)\sqcup\Spc(C)$, we have two possibilities. Either $\cat Q'\in\Spc(A)$, in which case (since $\varphi_1=\varphi_2=\id$ on the $\Spc(A)$ part) we get $\cat P'=\varphi_2(\cat Q')=\cat Q'=\varphi_1(\cat Q')\in\adhpt{\cat P}$, meaning $\cat P'\subseteq\cat P$, and we have $\cat P=\cat P'$ as wanted. Or $\cat Q'\in\Spc(C)$. In that case, by Going-Up Theorem~\ref{thm:going-up} applied to $\varphi_2$ (\ie to extension-of-scalar $F_2$), we deduce from $\varphi_2(\cat Q')=\cat P'\supseteq\cat P$ the existence of $\cat Q\subseteq \cat Q'$ such that $\varphi_2(\cat Q)=\cat P$. Since $\cat Q$ belongs to $\adhpt{\cat Q'}$, it is also in the $\Spc(C)$ part. Now $\varphi_1(\cat Q)\subseteq\varphi_1(\cat Q')\subseteq\cat P$ and since $\varphi_A\varphi_1=\varphi_A\varphi_2$, the point $\varphi_1(\cat Q)$ belongs to the same $\varphi_A$-fiber as $\cat P$. Since $\cat P$ is minimal in its fiber, we must have $\varphi_1(\cat Q)=\varphi_1(\cat Q')=\cat P$. In short, we have $\cat Q\subseteq\cat Q'$ in $\Spc(C)$ such that $\psi_1(\cat Q)=\psi_1(\cat Q')$. As explained above, we can apply the induction hypothesis to~$\psi_1$, hence deduce $\cat Q=\cat Q'$. Taking the image under $\varphi_2$ finally gives $\cat P=\cat P'$, as wanted.

For~\eqref{it:finite}, let $\cat P_0\in\SpcK$ and consider its fiber $\varphi_A\inv(\cat P_0)\subseteq\Spc(\AK)$. We need to show that $|\varphi_A\inv(\cat P_0)|\leq d$. If it is empty, we are done. Otherwise, pick $\cat P\in \varphi_A\inv(\cat P_0)$. Consider Diagram~\ref{eq:main} again. We are going to define an injection $\varphi_A\inv(\cat P_0)\oursetminus\{\cat P\}\hook \psi_1\inv(\cat P)\subseteq\Spc(C)$. This is sufficient since by induction hypothesis, the fibers of $\psi_1$ have at most~$d-1$ points. For every $\cat P'\in\varphi_A\inv(\cat P_0)$ with $\cat P'\neq\cat P$, there exists by Lemma~\ref{lem:pb} a prime $\cat Q'\in\Spc(A\potimes{2}\MModcat{K})$ such $\varphi_2(\cat Q')=\cat P'$ and $\varphi_1(\cat Q')\subseteq\cat P$. Since $\varphi_A\varphi_1=\varphi_A\varphi_2$, the prime $\varphi_1(\cat Q')$ goes to~$\cat P_0$ under $\varphi_A$ and therefore $\varphi_1(\cat Q')=\cat P$ by Incomparability~\eqref{it:incomp}. Under the decomposition of $\Spc(A\potimes{2})$ as $\Spc(A)\sqcup\Spc(C)$, we cannot have $\cat Q'\in\Spc(A)$ for then the relations $\cat P=\varphi_1(\cat Q')$ and $\cat P'=\varphi_2(\cat Q')$ force $\cat P=\cat P'$ (since $\varphi_1=\id$ and $\varphi_2=\id$ on the $\Spc(A)$ part). Hence $\cat Q'$ belongs to the $\Spc(C)$ part and $\varphi_1(\cat Q')=\cat P$ now reads $\psi_1(\cat Q')=\cat P$. Therefore, we have constructed for every $\cat P'\in\varphi_A\inv(\cat P_0)\oursetminus\{\cat P\}$ a prime $\cat Q'\in\psi_1\inv(\cat P)$ such that $\psi_2(\cat Q')=\cat P'$. The latter relation shows that different $\cat P'$ have different $\cat Q'$. Hence the wanted injection $\varphi_A\inv(\cat P_0)\oursetminus\{\cat P\}\hook\psi_1\inv(\cat P)$.
\end{proof}

\begin{Cor}
\label{cor:w-pb}%
Let $A\in\cat K$ be a tt-ring of finite degree and let $F:\cat K\to \cat L$ be a tt-functor. Let $B=F(A)$ in~$\cat L$. Then the induced commutative square~\eqref{eq:pb}
$$
\vcenter{\xymatrix{
& \SpcK
& \Spc(\cat L) \ar[l]_-{\varphi}
& \cat Q \ar@{}[l]|-{\ni}
\\
\cat P \ar@{}[r]|-{\in}
& \Spc(A\MModcat{K}) \ar[u]^-{\varphi_A}
& \Spc(B\MModcat{L}) \ar[l]_-{\bar \varphi} \ar[u]_-{\varphi_B}
}}
$$
is \emph{weakly cartesian} in the following sense\,: For every $\cat P\in\Spc(\AK)$ and $\cat Q\in\Spc(\cat{L})$ such that $\varphi_A(\cat P)=\varphi(\cat Q)$ there exists $\cat Q'\in \Spc(B\MModcat{L})$ such that $\bar\varphi(\cat Q')=\cat P$ and $\varphi_B(\cat Q')=\cat Q$.
\end{Cor}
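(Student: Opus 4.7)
The plan is to use Lemma~\ref{lem:pb} to produce a candidate $\cat Q'$ and then upgrade the inclusion $\bar\varphi(\cat Q') \subseteq \cat P$ it provides to an equality by invoking Incomparability from Theorem~\ref{thm:main}.

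More precisely, I would apply Lemma~\ref{lem:pb} to the given data $\cat P \in \Spc(\AK)$ and $\cat Q \in \Spc(\cat L)$ with $\varphi_A(\cat P) = \varphi(\cat Q)$. This directly produces $\cat Q' \in \Spc(B\MModcat{L})$ such that $\varphi_B(\cat Q') = \cat Q$ and $\bar\varphi(\cat Q') \subseteq \cat P$ (i.e.\ $\bar\varphi(\cat Q') \in \adhpt{\cat P}$). The only remaining issue is that we might have strict containment $\bar\varphi(\cat Q') \subsetneq \cat P$, and we want equality.

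To force equality, the observation is that commutativity of~\eqref{eq:pb} gives
\[
\varphi_A\bigl(\bar\varphi(\cat Q')\bigr) \;=\; \varphi\bigl(\varphi_B(\cat Q')\bigr) \;=\; \varphi(\cat Q) \;=\; \varphi_A(\cat P),
\]
so the two primes $\bar\varphi(\cat Q')$ and $\cat P$ of $\AK$ lie in the same fiber of $\varphi_A$ while one is contained in the other. Since $A$ has finite degree, Theorem~\ref{thm:main}\,\eqref{it:incomp} (Incomparability) applies and yields $\bar\varphi(\cat Q') = \cat P$, as required.

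There is no real obstacle here: the statement is a direct corollary of Lemma~\ref{lem:pb} combined with Incomparability, and the only subtlety is observing that finite degree of $A$ in $\cat K$ is exactly the hypothesis needed to rule out a strict specialization $\bar\varphi(\cat Q') \subsetneq \cat P$ within a single $\varphi_A$-fiber. (Note that we do not need $B$ itself to have finite degree, nor do we apply Going-Up; all the work happens on the $\cat K$-side via Incomparability for~$\varphi_A$.)
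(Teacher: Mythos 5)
Your proposal is correct and matches the paper's own proof essentially verbatim: Lemma~\ref{lem:pb} provides $\cat Q'$ with $\varphi_B(\cat Q')=\cat Q$ and $\bar\varphi(\cat Q')\subseteq\cat P$, commutativity of the square places $\bar\varphi(\cat Q')$ and $\cat P$ in the same $\varphi_A$-fiber, and Incomparability (Theorem~\ref{thm:main}\,\eqref{it:incomp}) upgrades the inclusion to equality. Your side remark that finite degree is only needed for $A$ (to invoke Incomparability for~$\varphi_A$) is also accurate.
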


\begin{proof}
Choose $\cat Q'$ as in Lemma~\ref{lem:pb}, \ie with $\bar\varphi(\cat Q')\subseteq \cat P$ and $\varphi_B(\cat Q')=\cat Q$. Then $\varphi_A(\bar\varphi(\cat Q'))=\varphi(\varphi_B(\cat Q'))=\varphi(\cat Q)=\varphi_A(\cat P)$. So $\bar\varphi(\cat Q')=\cat P$ by Theorem~\ref{thm:main}\,\eqref{it:incomp}.
\end{proof}

\begin{Rem}
The above square is not cartesian in general. For instance, using $\bbC\otimes_\bbR\bbC\simeq\bbC\times\bbC$, we see that for $\cat K=\Db(\bbR)$ and $A=\bbC$, and thus $\cat L=\Db(\bbC)$, the square has one point in three corners but two points in the bottom-right corner.
\end{Rem}

\begin{Thm}
\label{thm:coeq}%
Let $A$ be a tt-ring of finite degree. Then we have a coequalizer
$$
\xymatrix{\Spc(A\potimes{2}\MModcat{K})\, \ar@<.2em>[r]^-{\varphi_1} \ar@<-.2em>[r]_-{\varphi_2}
& \ \Spc(A\MModcat{K})\, \ar[r]^-{\varphi_A}
& \,\supp(A)
}
$$
where $\varphi_i$ is the map induced by extension-of-scalars $F_i:A\MModcat{K}\too A\potimes{2}\MModcat{K}$ along the two obvious homomorphisms $f_i:A\to A\otimes A$, \ie $f_1=1\otimes \eta$ and $f_2=\eta\otimes 1$.
\end{Thm}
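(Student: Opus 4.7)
The plan is to verify the universal property of the coequalizer by establishing two things: that $\varphi_A\colon \SpcAK \to \supp(A)$ is a topological quotient map, and that its fiber equivalence relation is matched \emph{on the nose} by the parallel pair $(\varphi_1,\varphi_2)$, meaning any two points in the same fiber are $\varphi_1(\cat Q),\varphi_2(\cat Q)$ for a common $\cat Q$.

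The quotient-map property is quick. First, $\varphi_A \varphi_1 = \varphi_A \varphi_2$ because both composites $F_1 F_A, F_2 F_A\colon \cat K \to A\potimes{2}\MModcat{K}$ are canonically isomorphic to extension-of-scalars $F_{A\otimes A}$, since $f_1\eta = f_2\eta = \eta_{A\otimes A}$. Surjectivity of $\varphi_A$ onto $\supp(A)$ is Theorem~\ref{thm:varphi}\eqref{it:image}, and $\varphi_A$ is closed by Theorem~\ref{thm:varphi}\eqref{it:S}; since $\supp(A)\subseteq\SpcK$ is closed and carries the subspace topology, the corestricted map $\varphi_A\colon \SpcAK \to \supp(A)$ is a continuous closed surjection, hence a quotient map.

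The substantive step is to show that whenever $\cat P,\cat P'\in\SpcAK$ satisfy $\varphi_A(\cat P)=\varphi_A(\cat P')$, some $\cat Q\in\Spc(A\potimes{2}\MModcat{K})$ realises $\{\varphi_1(\cat Q),\varphi_2(\cat Q)\}=\{\cat P,\cat P'\}$. I would obtain this by applying Corollary~\ref{cor:w-pb} to the tt-functor $F := F_A\colon \cat K \to \cat L := A\MModcat{K}$, which is where the finite-degree hypothesis on $A$ enters. Setting $B := F_A(A)$ in $\cat L$, the canonical equivalence $B\MModcat{L}\cong A\potimes{2}\MModcat{K}$ recalled in the proof of Theorem~\ref{thm:main} identifies $\varphi_B$ and $\bar\varphi$ with $\varphi_1$ and $\varphi_2$ in some order: the $A$-algebra structure on $F_A(A)$ is $f_1=1\otimes\eta$, so $\varphi_B=\varphi_1$, while the relation $\bar F F_A = F_B F_A$ from Remark~\ref{rem:barF} combined with $f_2\eta=\eta_{A\otimes A}$ forces $\bar F\simeq F_2$, so $\bar\varphi=\varphi_2$. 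The weakly cartesian conclusion of Corollary~\ref{cor:w-pb} then delivers the desired~$\cat Q$.

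The main obstacle is really the bookkeeping in matching $(\varphi_B,\bar\varphi)$ with $(\varphi_1,\varphi_2)$ under the equivalence of module categories; all serious topological content is already packaged into Theorem~\ref{thm:varphi} and Corollary~\ref{cor:w-pb}. Once the identification is set, the quotient-map property and single-step achievability of the fiber relation together give the universal property of the coequalizer.
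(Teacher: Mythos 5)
Your proposal is correct and takes essentially the same route as the paper: both prove $\varphi_A$ is a closed surjection onto $\supp(A)$ via Theorem~\ref{thm:varphi}\,\eqref{it:image} and~\eqref{it:S}, and both get the identification of fibers (equivalently, injectivity of the induced map from the coequalizer) from Corollary~\ref{cor:w-pb} applied to $F=F_A$, under the identification $B\MModcat{L}\cong A\potimes{2}\MModcat{K}$ that the paper also invokes in the proof of Theorem~\ref{thm:main}. One small wrinkle: the relation $\bar F F_A = F_B F_A$ by itself does not \emph{force} $\bar F\simeq F_2$ (since $F_A$ is not an equivalence); one should rather unwind the construction of $\bar F$ from Remark~\ref{rem:barF} to see it coincides with $F_2$ under the canonical equivalence of module categories — but this is exactly the bookkeeping you flag, and it is correct.
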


\begin{proof}
Let $W$ be the coequalizer of $\varphi_1$ and~$\varphi_2$. Since $\varphi_A\varphi_1=\varphi_A\varphi_2$, the map $\varphi_A$ induces a continuous map $\bar\varphi:W\to \supp(A)$. By Theorem~\ref{thm:varphi}\,\eqref{it:image}, the map~$\bar\varphi$ is surjective. It is injective by Corollary~\ref{cor:w-pb} for $F=F_A$. (See the explanations in the second half of Remark~\ref{rem:barF} again.) Finally $\bar\varphi$ is a closed map since $\varphi_A:\Spc(\AK)\to \SpcK$ is a closed map by Theorem~\ref{thm:varphi}\,\eqref{it:S}.
\end{proof}

We now want to discuss conditions to replace the above $\supp(A)$ by~$\SpcK$.

\begin{Prop}
\label{prop:solid}%
Let $A$ be an object in~$\cat K$. The following are equivalent\,:
\begin{enumerate}[\rm(i)]
\item
\label{it:solid}%
It has maximal support\,: $\supp(A)=\SpcK$.
\smallbreak
\item
The thick $\otimes$-ideal generated by~$A$ is the whole~$\cat K$; in symbols\,: $\ideal{A}=\cat K$.
\end{enumerate}
If $A$ is a (separable) ring object then the above properties are also equivalent to\,:
\begin{enumerate}[\rm(i)]
\setcounter{enumi}{2}
\item
\label{it:nil-xi}%
In some (hence in every) distinguished triangle $J\oto{\xi}\unit\oto{\eta}A\oto{\zeta}\Sigma J$ over the unit map~$\eta$, the morphism $\xi$ is $\otimes$-nilpotent.
\smallbreak
\item
\label{it:F_A-wf}%
Extension-of-scalars $F_A:\cat K\to \AK$ is \emph{nil-faithful}\,: If a morphism $f$ in $\cat K$ is such that $F_A(f)=0$ (\ie $A\otimes f=0$) then $f$ is $\otimes$-nilpotent.
\smallbreak
\item
\label{it:U_A-gen}%
The image of the forgetful functor $U_A:A\MModcat{K}\to \cat K$ generates~$\cat K$ as a thick $\otimes$-ideal; in symbols\,: $\ideal{U_A(\AK)}=\cat K$.
\end{enumerate}
\end{Prop}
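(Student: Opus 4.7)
The plan is to handle the non-ring equivalences first, then close a loop among (ii), (iii), (iv) using the ring structure, with (v) as an easy separate step. For (i)~$\Leftrightarrow$~(ii), I would apply the Existence Lemma~\ref{lem:exist} to $\cat J = \ideal{A}$ and $S = \{\unit\}$: if $\ideal{A}$ is proper (equivalently, $\unit \notin \ideal{A}$) then the lemma produces a prime $\cat P \supseteq \ideal{A}$, so $A \in \cat P$ gives $\cat P \notin \supp(A)$; conversely any prime containing $A$ contains $\ideal{A}$. For (ii)~$\Leftrightarrow$~(v), the inclusion $\ideal{A} \subseteq \ideal{U_A(\AK)}$ is immediate from $A = U_A F_A(\unit)$; the reverse uses separability as in Remark~\ref{rem:sep-tri}, namely that every $y \in \AK$ satisfies $y \leq F_A U_A(y)$, so $U_A(y) \leq U_A F_A U_A(y) \cong A \otimes U_A(y) \in \ideal{A}$.

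Now assume $A$ is a separable ring and fix a distinguished triangle $J \oto{\xi} \unit \oto{\eta} A \oto{\zeta} \Sigma J$. The key technical observation underlying (iii)~$\Leftrightarrow$~(iv) is that, for any $f : x \to y$ in $\cat K$, the condition $A \otimes f = 0$ is equivalent to $(\eta \otimes 1_y) \circ f = 0$ (one direction by precomposition with $\eta \otimes 1_x$; the other by tensoring with $A$ and applying $\mu \circ (1_A \otimes \eta) = 1_A$), which via the cofiber triangle is equivalent to $f$ factoring through $\xi \otimes 1_y$. Then (iii)~$\Rightarrow$~(iv) is immediate by tensoring such a factorization $n$~times, while (iv)~$\Rightarrow$~(iii) follows by applying nil-faithfulness to $f = \xi$ itself, since $A \otimes \xi = 0$ (as $1_A \otimes \eta$ is split monic via~$\mu$). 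Finally (iii)~$\Rightarrow$~(ii) is the standard observation that $\xi^{\otimes n} = 0$ makes $\unit$ a direct summand of $\cone(\xi^{\otimes n})$, and iterated octahedra on the composition $J^{\otimes n} \to J^{\otimes n-1} \to \cdots \to \unit$ show this cone lies in $\ideal{A}$.

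The main obstacle is the converse (ii)~$\Rightarrow$~(iii). My strategy is to introduce the class
\[
\mathcal{C} := \{\, z \in \cat K \mid \xi^{\otimes n} \otimes 1_z = 0 \text{ for some } n \geq 1 \,\}
\]
and show it is a thick $\otimes$-ideal containing $A$; then (ii) forces $\unit \in \mathcal{C}$, which is precisely (iii). Membership $A \in \mathcal{C}$ holds with $n = 1$, since $\eta \otimes 1_A$ is split monic. Closure of $\mathcal{C}$ under shifts, direct summands, and $\otimes$ with arbitrary objects of $\cat K$ is formal. The delicate step is closure under cofibers: given a triangle $z_1 \to z_2 \to z_3 \to \Sigma z_1$ with $\xi^{\otimes n_i} \otimes 1_{z_i} = 0$ for $i = 1, 3$, I would tensor the triangle with $J^{\otimes n_3}$ and consider the morphism of triangles given by $\xi^{\otimes n_3} \otimes 1_{(-)}$. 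The right vertical arrow vanishes by hypothesis, so the middle arrow $\xi^{\otimes n_3} \otimes 1_{z_2}$ factors as $(z_1 \to z_2) \circ g$ for some $g : J^{\otimes n_3} \otimes z_2 \to z_1$. Composing on the left by $\xi^{\otimes n_1}$ and using naturality of $\xi^{\otimes n_1} \otimes (-)$ rewrites $\xi^{\otimes (n_1+n_3)} \otimes 1_{z_2}$ as a composition whose penultimate factor is $\xi^{\otimes n_1} \otimes 1_{z_1} = 0$, giving the required vanishing $\xi^{\otimes (n_1+n_3)} \otimes 1_{z_2} = 0$ and completing the proof.
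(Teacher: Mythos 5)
Your proof is correct and follows essentially the same route as the paper: (i)$\iff$(ii) is the standard fact (you re-prove it via the Existence Lemma, the paper cites it); (ii)$\iff$(v) via $\ideal{U_A(\AK)}=\ideal{A}$; (ii)$\Rightarrow$(iii) via the thick $\otimes$-ideal $\{z:\xi^{\otimes n}\otimes 1_z=0\text{ for some }n\}$ containing $A$; (iii)$\Rightarrow$(iv) by factoring $f$ through $\xi\otimes 1_y$; (iv)$\Rightarrow$(iii) by applying nil-faithfulness to $\xi$; and (iii)$\Rightarrow$(ii) by the octahedron. The one place you add genuine content is in verifying closure of $\mathcal{C}$ under cofibers (the two-out-of-three argument giving $\xi^{\otimes(n_1+n_3)}\otimes 1_{z_2}=0$), which the paper asserts without proof, and your argument there is correct.
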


\begin{proof}
(i)$\iff$(ii) is standard, see~\cite[Cor.\,2.5]{Balmer05a}. (ii)$\then$(iii) follows from the observation that $\SET{x\in \cat K}{\xi\otimes x\textrm{ is $\otimes$-nilpotent}}$ is a thick $\otimes$-ideal of~$\cat K$ which contains~$A$ since $\eta\otimes A$ is a split monomorphism. (iii)$\then$(iv)\,: If $f:x\to y$ is such that $A\otimes f=0$ then $(\eta\otimes y)f=0$ and therefore $f$ factors through $\xi\otimes y$ which is $\otimes$-nilpotent. (iv)$\then$(iii) is clear.  (iii)$\then$(ii) is well-known\,: by the Octahedron axiom $\cone(\xi\potimes{n})\in\ideal{\cone(\xi)}=\ideal{A}$ and if $\xi\potimes{n}=0\,:\ J\potimes{n}\to \unit\potimes{n}$ then $\unit\in\ideal{\cone(\xi\potimes{n})}$. To show (ii)$\iff$(v), it suffices to note that $\ideal{U_A(\AK)}=\ideal{A}$ in general. Since $A=U_A(\unit_A)$, only the inclusion $\subseteq$ requires verification. If $(x,\varrho)$ is an $A$-module then $\varrho:A\otimes x\to x$ is split by $\eta\otimes x$, hence $\ideal{A}\ni x=U_A(x,\varrho)$.
\end{proof}

\begin{Def}
\label{def:solid}%
Let us say that an object $A\in \cat K$ is \emph{nil-faithful} if for every morphism $f$ in $\cat K$, we have that $A\otimes f=0$ implies $f\potimes{n}=0$ for some~$n\geq0$. For a tt-ring~$A$ this is equivalent to any of the properties of Proposition~\ref{prop:solid}, see~\eqref{it:F_A-wf}.
\end{Def}

\begin{Cor}
\label{cor:solid}%
Let $F:\cat K\to \cat L$ be a tt-functor. If $A$ is a nil-faithful tt-ring in~$\cat K$ then so is $F(A)$ in~$\cat L$.
\end{Cor}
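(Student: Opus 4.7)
The plan is to invoke Proposition~\ref{prop:solid} both as input and as output. First I observe that $F(A)$ is itself a tt-ring in~$\cat L$: commutativity and the ring-object structure transport along any symmetric monoidal functor, while separability is preserved because the image $F(\sigma):F(A)\to F(A)\otimes F(A)$ of any separability section $\sigma:A\to A\otimes A$ of~$\mu$ is again a two-sided $F(A)$-linear section of~$F(\mu)$. In particular the five equivalent conditions of Proposition~\ref{prop:solid} apply to~$F(A)$.

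Given this, the cleanest route is to verify condition~\eqref{it:nil-xi} for~$F(A)$. By assumption $A$ is nil-faithful, so by Proposition~\ref{prop:solid} there exists a distinguished triangle $J\oto{\xi}\unit\oto{\eta}A\too\Sigma J$ in~$\cat K$ over the unit map of~$A$ with $\xi^{\otimes n}=0$ for some $n\geq1$. Since $F$ is exact and monoidal, applying it gives a distinguished triangle
\[
F(J)\ \oto{F(\xi)}\ \unit_{\cat L}\ \oto{F(\eta)}\ F(A)\ \too\ \Sigma F(J)
\]
over the unit map $F(\eta):\unit_{\cat L}\to F(A)$ of the tt-ring~$F(A)$, and $F(\xi)^{\otimes n}\cong F(\xi^{\otimes n})=F(0)=0$. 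Hence $F(\xi)$ is $\otimes$-nilpotent, and Proposition~\ref{prop:solid}\,\eqref{it:nil-xi} applied in the reverse direction yields that $F(A)$ is nil-faithful.

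There is no real obstacle here: each of the characterizations in Proposition~\ref{prop:solid} is manifestly invariant under tt-functors. Equivalently one could argue via condition~\eqref{it:solid} and the general identity $\supp(F(x))=\Spc(F)^{-1}(\supp(x))$ (cf.\ the proof of Theorem~\ref{thm:varphi}\,\eqref{it:x}, which is valid for any tt-functor $F:\cat K\to \cat L$), applied to $x=A$, yielding $\supp(F(A))=\Spc(F)^{-1}(\Spc(\cat K))=\Spc(\cat L)$ directly.
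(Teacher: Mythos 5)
Your proof is correct and takes essentially the same route as the paper's, which is a one-line remark that condition~(iii) of Proposition~\ref{prop:solid} is preserved by tt-functors; you simply spell out the details of that preservation, and add a valid alternative via condition~(i) and the support formula.
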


\begin{proof}
Condition~\eqref{it:nil-xi} in Proposition~\ref{prop:solid} is preserved by tt-functors.
\end{proof}

\begin{Rem}
This Corollary really uses the tensor. For instance, if $A$ only generates $\cat K$ as (thick) triangulated category, it is not necessarily true of~$F(A)$ in~$\cat L$.
\end{Rem}

\begin{Thm}[Descent-up-to-nilpotence]
\label{thm:nil-desc}%
Let $A$ be a \emph{nil-faithful} tt-ring of finite degree. With notation of Theorem~\ref{thm:coeq}, we have a coequalizer of topological spaces\,:
$$
\xymatrix{\Spc(A\potimes{2}\MModcat{K}) \ar@<.2em>[r]^-{\varphi_1} \ar@<-.2em>[r]_-{\varphi_2}
& \Spc(A\MModcat{K}) \ar[r]^-{\varphi_A}
& \SpcK\,.
}
$$
\end{Thm}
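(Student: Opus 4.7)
The plan is essentially to combine Theorem~\ref{thm:coeq} with Proposition~\ref{prop:solid}; almost all of the work has already been done. Theorem~\ref{thm:coeq} (which holds for any tt-ring of finite degree, no faithfulness hypothesis required) gives the coequalizer
$$\Spc(A^{\otimes 2}\MModcat{K})\ \rightrightarrows\ \Spc(A\MModcat{K})\ \to\ \supp(A)$$
in the category of topological spaces. The only gap between this and the statement of Theorem~\ref{thm:nil-desc} is the difference between $\supp(A)\subseteq \SpcK$ and $\SpcK$ itself.

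So the single step to carry out is to identify these two spaces under the hypothesis that $A$ is nil-faithful. This is exactly the content of Proposition~\ref{prop:solid}: the equivalence (\ref{it:solid})$\iff$(\ref{it:F_A-wf}) says that nil-faithfulness of the tt-ring~$A$ (in the sense of Definition~\ref{def:solid}, i.e.\ the version used in the hypothesis of the theorem) is equivalent to $\supp(A)=\SpcK$. Plugging this equality into the conclusion of Theorem~\ref{thm:coeq} immediately upgrades the target of the coequalizer from $\supp(A)$ to $\SpcK$, yielding the desired diagram.

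There is essentially no obstacle here, since the hard parts—surjectivity of $\varphi_A$ onto its image, the closedness of $\varphi_A$, and the injectivity of the induced map on the quotient, which together certify the coequalizer property—were all established in Theorems~\ref{thm:varphi}, \ref{thm:main}, Corollary~\ref{cor:w-pb}, and Theorem~\ref{thm:coeq}. The only conceptual point worth emphasizing in the write-up is that coequalizers in topological spaces are computed as in sets with the quotient topology, so replacing the target of a surjective coequalizer map by a homeomorphic (indeed, equal) subspace of a larger space is harmless. Thus the proof reduces to one line: apply Theorem~\ref{thm:coeq}, then invoke Proposition~\ref{prop:solid}\,(\ref{it:solid}) to rewrite $\supp(A)=\SpcK$.
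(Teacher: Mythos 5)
Your proposal is correct and takes exactly the same route as the paper, which simply invokes Theorem~\ref{thm:coeq} and then replaces $\supp(A)$ by $\SpcK$ via Proposition~\ref{prop:solid}. Your write-up is more expansive but the substance is identical.
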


\begin{proof}
In Theorem~\ref{thm:coeq}, use that $\supp(A)=\SpcK$ by Proposition~\ref{prop:solid}.
\end{proof}
\goodbreak

\goodbreak
\section{Applications}
\label{se:strat}%
\medbreak

Let $G$ be a finite group and $\Gsets$ the category of finite left $G$-sets. Let $\kk$ be a field of positive characteristic~$p$ dividing~$|G|$, typically just the prime field $\bbF_{\!p}=\bbZ/p$.
\begin{Cons}
\label{cons:AX}%
Let $\PZG$ be the category of those finitely generated left $\bbZ G$-modules which are free as $\bbZ$-modules. It is a tensor category with $\otimes_\bbZ$ and diagonal $G$-action. Consider for every $X\in \Gsets$ the permutation $\bbZ G$-module $\bbZ X$ and, following~\cite{Balmer15}, define on it a $\bbZ$-bilinear multiplication $\mu_X$ by setting $x\cdot x=x$ for all $x\in X$ and $x\cdot x'=0$ when $x\neq x'$. This gives a ring object $\uA(X)=(\bbZ X,\mu_X,\eta_X)$ in the category~$\PZG$ with unit $\eta_X:\bbZ\to \bbZ X$ given by $1\mapsto \sum_{x\in X}x$. For every $G$-map $f:X\to Y$, define $\uA(f):\uA(Y)\to \uA(X)$ by $y\mapsto \sum_{x\in f\inv(y)}x$. For instance, $\uA(X\to *)=\eta_X$. The ring object $\uA(X)$ is separable via $\sigma:\uA(X)\to \uA(X)\otimes \uA(X)$ given by $x\mapsto x\otimes x$. (Note that this map does not come from $\Gsets$.)

Consequently, for every tensor functor $\Phi:\PZG\too \cat K$ to a tt-category $\cat K$, for instance the standard $\PZG\too \DbG$ or $\PZG\too\stab(\kk G)$, the composite functor  $A:=\Phi\circ \uA:\Gsets\op\too\cat K$ provides tt-rings $A(X)$ in~$\cat K$ and ring homomorphisms $A(f):A(Y)\to A(X)$ for every $f:X\to Y$ in~$\Gsets$.
\end{Cons}

\begin{Exa}
For $X=G/H$ an orbit, our tt-ring $A^G_H=\kk(G/H)$ is such an~$A(X)$.
\end{Exa}

\begin{Thm}[Serre's vanishing of Bocksteins, gone tt]
\label{thm:Serre}%
Consider as above $A:\Gsets\op\too\DbG$ given by $X\mapsto A(X)=\kk X$. Let\footnote{In French Picardie, ``abelem" is a clever contraction for ``ab\'elien \'el\'ementaire".} $\Abelem(G)$ be the collection of elementary abelian $p$-subgroups of~$G$ (or only the maximal ones, or representatives up to conjugacy). Then the following tt-ring in $\DbG$
$$
A_{\textrm{elem}}:=\prod_{H\in\Abelem(G)}A(G/H)
$$
is nil-faithful in the sense of Definition~\ref{def:solid} (and Proposition~\ref{prop:solid}).
\end{Thm}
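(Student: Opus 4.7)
The plan is to verify condition~\eqref{it:nil-xi} of Proposition~\ref{prop:solid}\,: in a distinguished triangle $J\to\unit\oto{\eta}A_{\textrm{elem}}\to\Sigma J$ in $\DbG$, the morphism $\xi:J\to\kk$ must be $\otimes$-nilpotent. Equivalently, by the other parts of Proposition~\ref{prop:solid}, it suffices to prove that $\ideal{A_{\textrm{elem}}}=\DbG$, or that $\supp(A_{\textrm{elem}})=\Spc(\DbG)$.

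I would first reduce to the case where $G$ is a $p$-group. For a Sylow $p$-subgroup $P\leq G$, the trivial module $\kk$ is a direct summand of $\Ind^G_P\Res^G_P\kk=\kk(G/P)$ in characteristic~$p$, so any $\otimes$-nilpotence statement in $\DbG$ can be tested after restriction to $\Db(\kk P)$. A Mackey decomposition identifies $\Res^G_P(A(G/H))$ as a sum of permutation modules $\kk(P/H')$ with $H'$ running through $P$-conjugates of subgroups of $H$, so $\Res^G_P(A_{\textrm{elem}}(G))$ contains enough pieces to generate $A_{\textrm{elem}}(P)$ as a thick $\otimes$-ideal in $\Db(\kk P)$.

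For $G$ a $p$-group, I would then induct on~$|G|$. The base case is $G$ itself elementary abelian, where $G\in\Abelem(G)$ and $A(G/G)=\kk=\unit$ is a direct ring summand of~$A_{\textrm{elem}}$, so $\eta$ is split and $J=0$. For the inductive step, suppose $G$ is not elementary abelian. Serre's classical theorem~\cite{Serre65} provides classes $\zeta_1,\ldots,\zeta_r\in H^1(G,\bbF_{\!p})$ (replaced by squares in $H^2$ when $p=2$) whose Bocksteins have vanishing product $\beta(\zeta_1)\cdots\beta(\zeta_r)=0$ in $H^*(G,\kk)$. Each kernel $N_i:=\ker(\zeta_i)$ is a proper index-$p$ subgroup of~$G$. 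The classical 4-term exact sequence
$$0\to\kk\to\kk(G/N_i)\to\kk(G/N_i)\to\kk\to0$$
obtained by splicing the augmentation with its norm dual identifies $\beta(\zeta_i)\in\Hom_{\DbG}(\kk,\kk[2])$ as the Yoneda class killed upon tensoring with~$\kk(G/N_i)=A(G/N_i)$. Moreover, every elementary abelian subgroup of $G$ is proper and hence contained in some maximal subgroup, which is one of the~$N_i$; thus $\Abelem(G)\subseteq\bigcup_i\Abelem(N_i)$.

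The final step assembles these ingredients\,: by the inductive hypothesis on each $N_i$, the restriction of $A_{\textrm{elem}}(G)$ to $\Db(\kk N_i)$ generates the whole tt-category, so a suitable tensor power of the fiber map $\xi_i$ over $N_i$ vanishes. Combining this with the factorization of $\prod_i\beta(\zeta_i)=0$ through a tensor product of the $A(G/N_i)$'s, and using the Octahedron axiom to build $\xi^{\otimes N}$ from the factorwise fibers $\xi_H$ with $H\in\Abelem(G)$, yields the required $\otimes$-nilpotence of~$\xi$ over~$G$. The main obstacle is precisely this combinatorial assembly\,: converting Serre's cohomological product identity into an explicit factorization in $\DbG$ and merging it with the inductive nil-faithfulness on each proper $N_i$ into a concrete bound on the nilpotence order of~$\xi$, which requires careful bookkeeping with the Octahedron axiom and Carlson's technique of realizing cohomology classes as morphisms trivialized by tensoring with permutation modules.
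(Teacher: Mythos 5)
Your overall strategy is the same as the paper's---reduce to $p$-groups, induct on $|G|$, invoke Serre's vanishing of Bockstein products, and use the $4$-term sequence and the octahedron axiom to get into $\ideal{A_{\textrm{elem}}}$---but the final assembly step, which you yourself flag as ``the main obstacle,'' contains a genuine gap and does not close the argument.

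Two specific problems. First, the assertion ``$\Abelem(G)\subseteq\bigcup_i\Abelem(N_i)$'' is not justified: Serre's theorem only produces \emph{specific} index-$p$ subgroups $N_i=\ker(\zeta_i)$, not all maximal subgroups of~$G$, so an elementary abelian subgroup of~$G$ has no reason to sit inside any of the chosen~$N_i$. Luckily this claim is also unnecessary, since in the end you only need the induction hypothesis applied to each~$N_i$ itself (and the trivial containment $\Abelem(N_i)\subseteq\Abelem(G)$). Second and more seriously, you never reach the observation that makes the proof click: viewing $\beta(\zeta_i)$ as a map $\kk\to\kk[2]$ in~$\DbG$, its cone is \emph{literally} the two-term complex $\cdots\to 0\to\kk(G/N_i)\to\kk(G/N_i)\to 0\to\cdots$ sitting in the middle of your $4$-term sequence, and therefore $\cone(\beta(\zeta_i))\in\ideal{\kk(G/N_i)}=\ideal{A(G/N_i)}$. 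This is stronger than the fact that $\beta(\zeta_i)$ dies after $\otimes\,\kk(G/N_i)$, which is all you record. With this cone identification, the octahedron (or Yoneda splice) immediately gives $\cone\big(\beta(\zeta_1)\otimes\cdots\otimes\beta(\zeta_m)\big)\in\ideal{A(G/N_1),\ldots,A(G/N_m)}$, and the vanishing of the product forces that cone to be $\kk[1]\oplus\kk[2m]$, whence $\kk\in\ideal{A(G/N_1),\ldots,A(G/N_m)}$. That is the whole argument for ``$\prod_{H\lneqq G}A(G/H)$ generates,'' and combined with the induction hypothesis (via $\Ind^G_{N_i}$ and the projection formula, which give $\kk(G/N_i)\in\ideal{A_{\textrm{elem}}(G)}$) you obtain $\kk\in\ideal{A_{\textrm{elem}}(G)}$. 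Your plan instead tries to manufacture a concrete tensor power $\xi^{\otimes N}=0$ by gluing ``factorwise fibers $\xi_H$'' across the $N_i$; this is both vague and superfluous, because once you verify condition (ii) of Proposition~\ref{prop:solid} ($\ideal{A_{\textrm{elem}}}=\cat K$) the equivalent nilpotence statement comes for free.
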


\begin{proof}
Let $\cat K=\DbG$. Since $A(G/H)\otimes-\simeq\Res^G_H$ is faithful when $[\,G:H\,]$ is prime to~$p$, we reduce to $G$ a $p$-group. By induction on~$|G|$ it suffices to show that for every $p$-group $G$ which is not elementary abelian, the tt-ring
$$
A:=\prod_{H\lneqq G}A(G/H)
$$
is nil-faithful in~$\DbG$. By Proposition~\ref{prop:solid}\,\eqref{it:U_A-gen} again, it suffices to prove that $\ideal{A}=\DbG$. Now, Serre's Theorem~\cite[Prop.\,4]{Serre65} precisely says that there are proper subgroups (of index~$p$) $H_1,\ldots ,H_m$ of~$G$ such that the product $\beta(z_1)\cdots \beta(z_m)$ vanishes in $\rmH^{2m}(G,\kk)$, where $\beta(z_i)\in\rmH^2(G,\kk)$ is the Bockstein element associated to $z_i:G\onto G/H_i\simeq\bbZ/p$. As an element of $\Ext^2_{\kk G}(\kk,\kk)\cong\rmH^{2}(G,\kk)$, this Bockstein is given by an exact sequence of the form
\begin{equation}
\label{eq:Bock}%
\beta(z_i)=\,\Big(\,0\to \kk \to \kk(G/H_i) \to \kk(G/H_i) \to \kk\to 0\,\Big)\,.
\end{equation}
If we also denote by $\beta(z_i):\kk\to \kk[2]$ the corresponding element in $\Homcat{K}(\kk,\kk[2])\cong\Ext^2_{\kk G}(\kk,\kk)$ then its cone is the complex $\cdots \to 0\to \kk(G/H_i) \to \kk(G/H_i) \to 0\to \cdots$ appearing above, ``in the middle". In particular, $\cone(\beta(z_i))\in \ideal{\kk(G/H_i)}=\ideal{A(G/H_i)}$. Therefore, an easy application of the octahedron (or just Yoneda splice) shows that the cone of our product $\beta(z_1)\cdots \beta(z_m)=\beta(z_1)\otimes \cdots\otimes \beta(z_m)$ belongs to $\ideal{A(G/H_1), \ldots,A(G/H_m)}\subset \ideal{A}$. Since this product is zero and since $\cone(0:\kk\to \kk[2m])=\kk[1]\oplus\kk[2m]$ we get $\kk\in \ideal{A}$ as wanted.
\end{proof}

\begin{Rem}
I am thankful to Rapha\"el Rouquier for simplifying my earlier proof of Theorem~\ref{thm:Serre}, which involved the following, perhaps interesting, observation. The exact sequence~\eqref{eq:Bock} is the cone of an isomorphism $\psi_i:J_i\isoto J_i^*[-2]$ in $\DbG$, where $J_i=(\cdots0\to \kk\to \kk(G/H_i)\to 0\cdots)$ fits in the distinguished triangle $J_i\oto{\xi_i}\kk\oto{\eta_i} A(G/H_i)\to \Sigma J_i$ as in Proposition~\ref{prop:solid}. A direct computation shows that the Bockstein~$\beta(z_i)$ is equal to the composite $\xi_i[2]\circ\psi_i\inv\circ\xi_i^*$. In other words, our morphism $\xi_i$ appears ``twice" in the corresponding Bockstein.

The analogue of Theorem~\ref{thm:Serre} also holds for $\cat K=\stab(\kk G)$ by Corollary~\ref{cor:solid}.
\end{Rem}

\begin{Def}
\label{def:Or(G)}%
Recall that the \emph{orbit category} $\Or(G)$ of the group~$G$ has objects indexed by subgroups $H\leq G$, thought of as the corresponding orbit, \ie morphisms are given by $\Mor_{\Or(G)}(H,K)=\Mor_{\Gsets}(G/H,G/K)$. We write $\Or(G,\Abelem)$ for the full subcategory of $\Or(G)$ on those $H$ which are elementary abelian $p$-groups.
\end{Def}

\begin{Rem}
\label{rem:blahblah}%
In~\cite{Carlson00}, Carlson already noted that Serre's theorem implies that the support variety $\cV_G=\Proj(\rmH^\sbull(G,\kk))$ is covered by the images of the~$\cV_H$
\begin{equation}
\label{eq:weak-Quillen}%
\cV_G=\bigcup_{H\in\Abelem(G)}\Res^*\cV_H\,.
\end{equation}
This equality is part of Quillen's Stratification Theorem~\eqref{eq:Quillen}, which says that\,:
\begin{equation}
\label{eq:strong-Quillen}%
\cV_G\cong\mathop{\colim}\limits_{H\in\Or(G,\Abelem)}\cV_H\,.
\end{equation}
In terms of cohomology rings, \eqref{eq:weak-Quillen} says that $\rmH^\sbull(G,\kk)\too\prod_H\rmH^\sbull(H,\kk)$ has nilpotent kernel whereas the colimit-version~\eqref{eq:strong-Quillen} is saying moreover that elements in $\lim_H\rmH^\sbull(H,\kk)$ have some $p^r$-power coming from~$\rmH^\sbull(G,\kk)$. See for instance~\cite[Cor.\,II.5.6.4]{Benson98}. Another way to see how the colimit-version~\eqref{eq:strong-Quillen} is more informative than~\eqref{eq:weak-Quillen} is to note that~\eqref{eq:weak-Quillen} does not prevent $\cV_G$ from being a point, for instance, whereas~\eqref{eq:strong-Quillen} allowed Quillen to compute the Krull dimension of~$\cV_G$.

We now want to show that descent-up-to-nilpotence (Theorem~\ref{thm:nil-desc}) yields a generalization of Quillen's Stratification Theorem, in its strong form~\eqref{eq:strong-Quillen}.
\end{Rem}

Quillen's Stratification Theorem~\eqref{eq:strong-Quillen} is the case $\cat K=\stab(\kk G)$ of the following\,:

\begin{Thm}
\label{thm:Quillen}%
Let $\cat K$ be a tt-category and let $\Phi:\Db(\bbF_{\!p}G)\too \cat K$ be a tt-functor. For every subgroup $H\leq G$ consider the tt-ring $A_H=\Phi(\bbF_{\!p}(G/H))$ in~$\cat K$ and define $\cat K(H)=A_H\MModcat{K}$ the corresponding category of modules. Then the collection of tt-functors $F_{A_H}:\cat K=\cat K(G)\too \cat K(H)$ induces a homeomorphism
\begin{equation}
\label{eq:colim}%
\bar\varphi:\mathop{\colim}_{H\in \Or(G,\Abelem)}\Spc(\cat K(H))\isotoo \Spc(\cat K)\,.
\end{equation}
\end{Thm}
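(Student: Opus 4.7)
The plan is to apply the descent-up-to-nilpotence Theorem~\ref{thm:nil-desc} to the single ``big'' tt-ring
\[
A \;:=\; \Phi(A_{\textrm{elem}}) \;=\; \prod_{H \in \Abelem(G)} A_H
\]
in $\cat K$, and then rewrite the resulting coequalizer of spectra as the desired colimit over $\Or(G,\Abelem)$.

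First I would verify the hypotheses of Theorem~\ref{thm:nil-desc} for $A$. By Serre's Theorem~\ref{thm:Serre}, $A_{\textrm{elem}}$ is nil-faithful in $\Db(\bbF_{\!p}G)$, hence $A=\Phi(A_{\textrm{elem}})$ is nil-faithful in $\cat K$ by Corollary~\ref{cor:solid}. As for finite degree, each factor $A_H=\Phi(\bbF_{\!p}(G/H))$ has degree at most $[G\!:\!H]$ in $\cat K$, by Remark~\ref{rem:deg} and the fact that degree does not grow under tt-functors; a finite product of finite-degree tt-rings is again of finite degree. Theorem~\ref{thm:nil-desc} therefore yields a coequalizer of topological spaces
\[
\Spc(A\potimes{2}\MModcat{K}) \rightrightarrows \Spc(A\MModcat{K}) \to \Spc(\cat K)\,.
\]

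Next, I would decompose both ends of this coequalizer. From $A=\prod_H A_H$ one obtains $A\MModcat{K}\cong \prod_{H\in\Abelem(G)}\cat K(H)$, so $\Spc(A\MModcat{K})=\bigsqcup_{H}\Spc(\cat K(H))$. For the double tensor, the Mackey-style $G$-set decomposition
\[
G/H\times G/K \;\cong\; \bigsqcup_{HgK\,\in\,H\backslash G/K} G/(H\cap gKg\inv)
\]
passes through the tensor functor $\Phi\circ\uA$ (of Construction~\ref{cons:AX}) to give a product decomposition of the ring object $A_H\otimes A_K$, whence $\Spc(A\potimes{2}\MModcat{K})=\bigsqcup_{H,K,HgK}\Spc(\cat K(H\cap gKg\inv))$; each $H\cap gKg\inv$ remains elementary abelian. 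Functoriality then identifies $\varphi_1$ and $\varphi_2$, on the summand indexed by $(H,K,HgK)$, with the maps of spectra induced by the two morphisms of $\Or(G,\Abelem)$
\[
H \;\;\hookleftarrow\;\; H\cap gKg\inv \;\;\xrightarrow{\,g\inv(\cdot)g\,}\;\; K\,,
\]
arising respectively from the two projections $G/H\times G/K\to G/H$ and $G/H\times G/K\to G/K$.

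It then remains to match the coequalizer relation with the colimit relation on the common underlying space $\bigsqcup_H\Spc(\cat K(H))$. For one inclusion, every morphism $f:H\to K$ in $\Or(G,\Abelem)$ is represented by some $g\in G$ with $g\inv Hg\subseteq K$; choosing the triple $(H,K,g)$ then gives $H\cap gKg\inv=H$, and the coequalizer-identification $\varphi_1(x)\sim\varphi_2(x)$ reads precisely as the colimit-identification $x\sim f_*(x)$. Conversely, for an arbitrary triple $(H,K,g)$ and $q\in\Spc(\cat K(H\cap gKg\inv))$, both $\varphi_1(q)$ and $\varphi_2(q)$ are images of the common source $q$ under morphisms of $\Or(G,\Abelem)$ out of $H\cap gKg\inv$, hence are already identified in the colimit by transitivity. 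The two equivalence relations thus coincide, and as both quotient spaces carry the quotient topology from the same disjoint union, the induced continuous bijection $\bar\varphi$ is a homeomorphism. The main technical obstacle is the Mackey-style decomposition of $A_H\otimes A_K$ together with the explicit identification of $\varphi_1,\varphi_2$ with the structural morphisms of the orbit category; once those are in hand, the passage from a two-term coequalizer to a full orbit-category colimit is essentially formal.
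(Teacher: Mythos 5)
Your proposal is correct and follows essentially the same route as the paper: reduce to the single tt-ring $A=\prod_{H\in\Abelem(G)}A_H$, verify nil-faithfulness via Serre's theorem and Corollary~\ref{cor:solid}, apply Theorem~\ref{thm:nil-desc}, decompose $A\otimes A$ via the Mackey formula for $G/H\times G/K$, and recognize $\varphi_1,\varphi_2$ as being induced by morphisms in $\Or(G,\Abelem)$. The only (minor) stylistic difference is in the last step: the paper deduces $\pi\circ\psi_1=\pi\circ\psi_2$ from the colimit universal property, produces a map $\bar\pi$ out of $\Spc(\cat K)$ by the coequalizer universal property, and shows $\bar\pi$ and $\bar\varphi$ are mutually inverse using that $\pi$ and $\sqcup\varphi_{A_H}$ are epi, whereas you compare the two equivalence relations on $\bigsqcup_H\Spc(\cat K(H))$ directly; these are equivalent arguments.
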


\begin{proof}
Following Construction~\ref{cons:AX}, we have a functor $A(-):\Gsets\op\to \cat K$, taking values in tt-rings and homomorphisms and whose restriction to $\Or(G)$ recovers the ring $A(G/H)=\Phi(\bbF_p(G/H))=A_H$ of the statement. Since $A_G=\Phi(\kk)=\unit_{\cat K}$, since $G/G$ is the final $G$-set and since $\Spc(-)$ is contravariant, we indeed have a canonical continuous map $\bar\varphi$ as in~\eqref{eq:colim} such that precomposed with the canonical map~$\Spc(\cat K(H))\to\colim_H\Spc(\cat K(H))$ gives~$\Spc(F_{A_H})$. Consider the tt-ring $A:=\prod_{H\in \Abelem(G)}A_H$, which is the image in~$\cat K$ of the tt-ring $A_{\textrm{elem}}$ of Serre's Theorem~\ref{thm:Serre}. Since $A_\textrm{elem}$ was nil-faithful in $\Dperf(\bbF_{\!p} G)$, Corollary~\ref{cor:solid} implies that $A$ is also nil-faithful in~$\cat K$. By descent-up-to-nilpotence (Theorem~\ref{thm:nil-desc}), we have a coequalizer of spaces
\begin{equation}
\label{eq:colim-A}%
\xymatrix{\Spc(A\potimes{2}\MModcat{K})\ \ar@<.2em>[r]^-{\varphi_1} \ar@<-.2em>[r]_-{\varphi_2}
& \ \Spc(A\MModcat{K}) \ar[r]^-{\varphi_A}
& \SpcK
}
\end{equation}
where the maps $\varphi_i$ are induced by $f_i:A\to A\otimes A$ given by $f_1=1\otimes \eta$ and $f_2=\eta\otimes 1$. Let us identify $A_H\otimes A_K=\Phi(\kk(G/H))\otimes \Phi(\kk(G/K))\cong \Phi(\kk(G/H\times G/K))=A(G/H\times G/K)$, so that the components $A_H\to A_H\otimes A_K$ and $A_K\to A_H\otimes A_K$ of $f_1$ and $f_2:A\to A\otimes A$ are simply the images under~$A:\Gsets\op\to \cat K$ of the two projections $\pr_1:G/H\times G/K\to G/H$ and $\pr_2:G/H\times G/K\to G/K$, respectively.

In $\Gsets$, for any two subgroups $H,K\leq G$, we have a Mackey isomorphism
$$
\coprod_{[g]\in \HGK}\beta_g:\quad \coprod_{[g]\in \HGK}G/(H^g\cap K)\isoto G/H\times G/K
$$
where the map $\beta_g:G/(H^g\cap K)\too G/H\times G/K$ is $[x]\mapsto ([xg\inv],[x])$. As usual, this map is non-canonical since it involves the choice of $g\in G$ for each double class $[g]\in \HGK$. Still, we get an isomorphism (first in~$\Db(\bbF_{\!p} G)$ and then in~$\cat K$)\,:
$$
\prod_{\ourfrac{H,K\in\Abelem(G)}{[g]\in \HGK}}A(\beta_g):\quad A\otimes A=\kern-1em\prod_{H,K\in\Abelem(G)}\kern-2em A_H\otimes A_K\isotoo \prod_{\ourfrac{H,K\in\Abelem(G)}{[g]\in\HGK}}A_{H^g\cap K}\,.
$$
Replacing the decompositions for $A$ and $A\otimes A$ in~\eqref{eq:colim-A}, we obtain a coequalizer
$$
\xymatrix{\coprod\limits_{\ourfrac{H,K\in\Abelem(G)}{[g]\in\HGK}}\Spc(A_{H^g\cap K})
 \ar@<.2em>[r]^-{\psi_1} \ar@<-.2em>[r]_-{\psi_2}
& \coprod\limits_{H\in \Abelem(G)} \Spc(A_H) \ar[rr]^-{\coprod\Displ\varphi_{A_H}}
&& \SpcK
}
$$
where the maps $\psi_1:\Spc(A_{H^g\cap K})\to \Spc(A_H)$ and $\psi_2:\Spc(A_{H^g\cap K})\to \Spc(A_K)$ are induced by the composition of the above maps $f_i:A\xymatrix@C=1em{\ar@<-.2em>[r]\ar@<.2em>[r]&}A\otimes A$ with the maps $A(\beta_g):A_H\otimes A_K\to A_{H^g\cap K}$ on components. Since all maps come through $A:\Gsets\op\to \cat K$, we can compute these compositions in~$\Gsets$ already and get
$$
\xymatrix@C=5em@R=1.5em{
& G/H
\\
G/(H^g\cap K) \ar[r]|-{\,\beta_g\,} \ar[ru]^-{\Displ\pr_1\beta_g=\alpha_g\quad} \ar[rd]_-{\Displ\pr_2\beta_g=\alpha_1\quad}
& G/H\times G/K \ar[u]_-{\pr_1} \ar[d]^-{\pr_2}
\\
& G/K
}
$$
where $\alpha_g:G/L\to G/L'$ denotes $[x]\mapsto [xg\inv]$ whenever ${}^{g\!}L\leq L'$. So, for every $H,K\leq G$ and $g\in G$, we have
\begin{equation}
\label{eq:psi}%
\psi_1{}_{\big|\Spc(A_{H^g\cap K})}=\Spc(A(\alpha_g))
\qquad\text{and}\qquad
\psi_2{}_{\big|\Spc(A_{H^g\cap K})}=\Spc(A(\alpha_1))\,.
\end{equation}
We can now compare our coequalizer with the colimit as follows\,:
$$
\xymatrix@C=1em@R=1em{
\coprod\limits_{\ourfrac{\scriptstyle H,K\in\Abelem(G)}{\scriptstyle [g]\in\HGK}}\kern-1em\Spc(A_{H^g\cap K})
 \ar@<.25em>[rr]^-{\psi_1} \ar@<-.25em>[rr]_-{\psi_2}
&& \coprod\limits_{H\in \Abelem(G)\vphantom{{}^{I^I}}} \kern-1em \Spc(A_H) \ar@{->>}[rr]^-{\Displ\sqcup\,\varphi_{A_H}} \ar@{->>}[rd]^-{\Displ\pi}
&& \SpcK \ar@/^.3em/[ld]^-{\Displ\bar\pi}
\\
&&& \kern-2em \mathop{\colim}\limits_{H\in \Or(G,\Abelem)\vphantom{{}^{I^I}}}\kern-1em \Spc(A_H) \kern-2em \ar@<.2em>[ru]^(.45){\Displ\bar\varphi}
}
$$
where $\pi$ is the quotient map and $\bar\varphi$ is the above continuous map such that~$\bar\varphi\pi=\sqcup_{\scriptscriptstyle H}\varphi_{A_H}$. By~\eqref{eq:psi}, the two maps $\psi_1$ and $\psi_2$ are the images of maps~$\alpha_g$ and $\alpha_1$ in $\Or(G,\Abelem)$ via the functor $H\mapsto \Spc(A_{H})$. Therefore by the colimit property $\pi\circ\psi_1=\pi\circ\psi_2$. Hence, by the coequalizer property, there is a continuous map $\bar\pi:\SpcK\to \colim_{H}\Spc(A_H)$ such that $\bar\pi\circ(\sqcup\varphi_{A_H})=\pi$, as in the above diagram. This $\bar\pi$ is therefore the inverse of~$\bar\varphi$ since $\pi$ and $\sqcup\varphi_{A_H}$ are epimorphisms.
\end{proof}

\begin{Exa}
\label{exa:equiv}%
Let $X$ be a scheme over~$\kk$ on which our finite group $G$ acts. Let $\cat K(G)=\Db_G(\VB_X)$ be the bounded derived category of $G$-equivariant vector bundles on~$X$. Then it receives $\DbG=\Db_G(\Spec(\kk))$ and $A_H\MModcat{\cat K(G)}\cong\cat {}\Db_{H}(\VB_X)$ for every subgroup~$H\leq G$ as can be readily verified from the restriction-coinduction adjunction. Alternatively, see~\cite[\S\,5]{BalmerDellAmbrogioSanders14pp}. Theorem~\ref{thm:Quillen} tells us that the spectrum of~$\Db_G(\VB_X)$ is the colimit of the spectra of $\Db_H(\VB_X)$ over~$H$ in~$\Or(G,\Abelem)$. Furthermore, the Krull dimension of $\Spc(\Db_G(\VB_X))$ is the maximum of the Krull dimensions of $\Spc(\Db_H(\VB_X))$ among the elementary abelian $p$-subgroups~$H\leq G$.
\end{Exa}

\noindent\textbf{Acknowledgements\,:} I would like to thank Serge Bouc, Ivo Dell'Ambrogio, John Greenlees, Alexander Merkurjev, Mike Prest, Rapha\"el Rouquier and Jacques Th\'e\-ve\-naz for helpful discussions. I am also thankful to two anonymous referees for their valuable comments.


\begin{thebibliography}{BKN14}

\bibitem[Bal05]{Balmer05a}
Paul Balmer.
\newblock The spectrum of prime ideals in tensor triangulated categories.
\newblock {\em J. Reine Angew. Math.}, 588:149--168, 2005.

\bibitem[Bal10]{BalmerICM}
Paul Balmer.
\newblock Tensor triangular geometry.
\newblock In {\em International {C}ongress of {M}athematicians, Hyderabad
  (2010), {V}ol. {II}}, pages 85--112. Hindustan Book Agency, 2010.

\bibitem[Bal11]{Balmer11}
Paul Balmer.
\newblock Separability and triangulated categories.
\newblock {\em Adv. Math.}, 226(5):4352--4372, 2011.

\bibitem[Bal12]{Balmer12}
Paul Balmer.
\newblock Descent in triangulated categories.
\newblock {\em Math. Ann.}, 353(1):109--125, 2012.

\bibitem[Bal14]{Balmer14}
Paul Balmer.
\newblock Splitting tower and degree of tt-rings.
\newblock {\em Algebra Number Theory}, 8(3):767--779, 2014.

\bibitem[Bal15]{Balmer15}
Paul Balmer.
\newblock Stacks of group representations.
\newblock {\em J. Eur. Math. Soc. (JEMS)}, 17(1):189--228, 2015.

\bibitem[BDS14]{BalmerDellAmbrogioSanders14pp}
Paul Balmer, Ivo Dell'Ambrogio, and Beren Sanders.
\newblock Restriction to finite index subgroups as \'etale extensions in
  topology, {KK}-theory and geometry.
\newblock {\em Algebr. Geom. Topol.}, 2014.
\newblock Preprint, 17~pages, to appear.

\bibitem[BCR97]{BensonCarlsonRickard97}
David~J. Benson, Jon~F. Carlson, and Jeremy Rickard.
\newblock Thick subcategories of the stable module category.
\newblock {\em Fund. Math.}, 153(1):59--80, 1997.

\bibitem[Ben98]{Benson98}
David~J. Benson.
\newblock {\em Representations and cohomology {I}\,\&\,{II}}, volume 30\,\&\,31
  of {\em Cambridge Studies in Advanced Mathematics}.
\newblock Cambridge University Press, 1998.

\bibitem[BGI71]{SGA6}
Pierre Berthelot, Alexander Grothendieck, and Luc Illusie, editors.
\newblock {\em SGA 6\,: Th\'eorie des intersections et th\'eor\`eme de
  {R}iemann-{R}och}.
\newblock Springer LNM\ 225. 1971.

\bibitem[BKN14]{BoeKujawaNakano14pp}
Brian Boe, Jonathan Kujawa, and Daniel Nakano.
\newblock Tensor triangular geometry for classical {L}ie superalgebras.
\newblock Preprint, 2014.

\bibitem[Car00]{Carlson00}
Jon~F. Carlson.
\newblock Cohomology and induction from elementary abelian subgroups.
\newblock {\em Q. J. Math.}, 51(2):169--181, 2000.

\bibitem[Del10]{DellAmbrogio10}
Ivo Dell'Ambrogio.
\newblock Tensor triangular geometry and {$KK$}-theory.
\newblock {\em J. Homotopy Relat. Struct.}, 5(1):319--358, 2010.

\bibitem[DT12]{DellAmbrogioTabuada12}
Ivo Dell'Ambrogio and Gon{\c{c}}alo Tabuada.
\newblock Tensor triangular geometry of non-commutative motives.
\newblock {\em Adv. Math.}, 229(2):1329--1357, 2012.

\bibitem[EM65]{EilenbergMoore65}
Samuel Eilenberg and John~C. Moore.
\newblock Adjoint functors and triples.
\newblock {\em Illinois J. Math.}, 9:381--398, 1965.

\bibitem[FP07]{FriedlanderPevtsova07}
Eric~M. Friedlander and Julia Pevtsova.
\newblock {$\Pi$}-supports for modules for finite group schemes.
\newblock {\em Duke Math. J.}, 139(2):317--368, 2007.

\bibitem[GM95]{GreenleesMay95}
J.~P.~C. Greenlees and J.~P. May.
\newblock Equivariant stable homotopy theory.
\newblock In {\em Handbook of algebraic topology}, pages 277--323.
  North-Holland, Amsterdam, 1995.

\bibitem[Hop87]{Hopkins87}
Michael~J. Hopkins.
\newblock Global methods in homotopy theory.
\newblock In {\em Homotopy theory (Durham, 1985)}, volume 117 of {\em LMS Lect.
  Note}, pages 73--96. Cambridge Univ. Press, 1987.

\bibitem[HS98]{HopkinsSmith98}
Michael~J. Hopkins and Jeffrey~H. Smith.
\newblock Nilpotence and stable homotopy theory. {II}.
\newblock {\em Ann. of Math. (2)}, 148(1):1--49, 1998.

\bibitem[K{\"u}n07]{Kuenzer07}
Matthias K{\"u}nzer.
\newblock Heller triangulated categories.
\newblock {\em Homology, Homotopy Appl.}, 9(2):233--320, 2007.

\bibitem[Mal06]{Maltsiniotis06}
Georges Maltsiniotis.
\newblock Cat\'egories triangul\'ees sup\'erieures.
\newblock Preprint available online at
  \url{http://people.math.jussieu.fr/$\sim$maltsin/ps/triansup.ps}, 2006.

\bibitem[Nee92]{Neeman92a}
Amnon Neeman.
\newblock The chromatic tower for {$D(R)$}.
\newblock {\em Topology}, 31(3):519--532, 1992.

\bibitem[Pet13]{Peter13}
Tobias~J. Peter.
\newblock Prime ideals of mixed {A}rtin-{T}ate motives.
\newblock {\em Journal of K-theory}, 11(2):331--349, 2013.

\bibitem[Qui71]{Quillen71}
Daniel Quillen.
\newblock The spectrum of an equivariant cohomology ring. {I}, {II}.
\newblock {\em Ann. of Math. (2)}, 94:549--572; ibid. (2) {\bf 94} (1971),
  573--602, 1971.

\bibitem[Ric89]{Rickard89}
Jeremy Rickard.
\newblock Derived categories and stable equivalence.
\newblock {\em J. Pure Appl. Algebra}, 61(3):303--317, 1989.

\bibitem[Ser65]{Serre65}
Jean-Pierre Serre.
\newblock Sur la dimension cohomologique des groupes profinis.
\newblock {\em Topology}, 3:413--420, 1965.

\bibitem[Ste13]{StevensonApp13}
Greg Stevenson.
\newblock Disconnecting spectra via localization sequences.
\newblock {\em J. K-Theory}, 11(2):324--329, 2013.
\newblock Appendix to: \textit{Module categories for group algebras over
  commutative rings}, by D.\ Benson, H.\ Krause and S.\ Iyengar.

\bibitem[Tho97]{Thomason97}
R.~W. Thomason.
\newblock The classification of triangulated subcategories.
\newblock {\em Compositio Math.}, 105(1):1--27, 1997.

\bibitem[Xu14]{Xu14}
Fei Xu.
\newblock Spectra of tensor triangulated categories over category algebras.
\newblock {\em Arch. Math. (Basel)}, 103(3):235--253, 2014.

\end{thebibliography}
\end{document}